\newcommand{\Xbar}{{\mathchoice
     {\smash@bar\textfont\displaystyle{0.55}{2.5}\mathscr{X}}
     {\smash@bar\textfont\textstyle{0.55}{2.5}\mathscr{X}}
     {\smash@bar\scriptfont\scriptstyle{0.55}{2.5}\mathscr{X}}
     {\smash@bar\scriptscriptfont\scriptscriptstyle{0.55}{2.5}\mathscr{X}}
          }}
\newcommand{\smash@bar}[4]{%
     \smash{\rlap{\raisebox{-#3\fontdimen5#10}{$\m@th#2\mkern#4mu\mathchar'26$}}}%
          }
\newcommand{\X}[1]{\mathscr{\Xbar}_{#1}}
\newcommand{\D}{\mathscr{D}}
\newcommand{\R}[1]{\mathbb{R}^{#1}}
\newcommand{\T}{\mathsf{T}}
\newcommand{\dd}{\mathrm{d}}
\newcommand{\ii}{\mathbf{i}}
\newcommand{\LL}{\mathscr{L}}
\newcommand{\OO}{\mathscr{O}}
\newcommand{\dlie}[1]{\mathrm{L}_{#1}}
\newcommand{\Cinf}[1]{\mathbf{\mathit{C}}^{\infty}_{#1}}
\newcommand{\ec}[1]{\mbox{$#1$}}
\newtheorem{criterion}[theorem]{Criterion}
\numberwithin{equation}{section}
\begin{document}

\renewcommand{\PaperNumber}{***}

\thispagestyle{empty}

%\FirstPageHeading

\ArticleName{Infinitesimal Poisson Algebras and Linearization \\ of Hamiltonian Systems}
\ShortArticleName{Infinitesimal Poisson Algebra}

\Author{D. Garc\'ia-Beltr\'an~${}^{1^\dag}$, J. C. Ru\'iz-Pantale\'on~${}^{2^\ddag}\,$ and Yu. Vorobiev~${}^{3^{\S}}$}

\AuthorNameForHeading{Garc\'ia-Beltr\'an, Ru\'iz-Pantale\'on, Vorobiev}

\Address{${\,}^{\dag}$~\,CONACyT Research-Fellow, Departamento de Matem\'aticas, Universidad de Sonora, M\'exico}
\Address{${\,}^{\ddag}$~Instituto de Matem\'aticas, Universidad Nacional Aut\'onoma de M\'exico, M\'exico}
\Address{${\,}^{\S}$~\,Departamento de Matem\'aticas, Universidad de Sonora, M\'exico}

\EmailD{$^{1}$\href{mailto:email@address}{jcpanta\,@im.unam.mx},
        $^{2}$\href{mailto:email@address}{dennise.garcia\,@unison.mx},
        $^{3}$\href{mailto:email@address}{yurimv\,@guaymas.uson.mx}}
%\URLaddressD{\url{http://www.home.org/~mitome/}}

%\ArticleDates{Received ???, in final form ????; Published online ????}

\Abstract{Using the notion of a contravariant derivative, we give some algebraic and geometric characterizations of Poisson algebras associated to the infinitesimal data of Poisson submanifolds. We show that such a class of Poisson algebras provides a suitable framework for the study of the Hamiltonization problem for the linearized dynamics along Poisson submanifolds.}

\Keywords{Poisson algebra, Poisson submanifold, Hamiltonian system, linearization, contravariant derivative.}

\Classification{53D17, 37J05, 53C05}

    \section{Introduction}

In this paper, we describe a class of Poisson algebras which appear in the context of infinitesimal geometry of Poisson submanifolds, known also as first class constraints \cite{We83, M.C.Marle2000, Zambon}. One of our motivations is to provide a suitable framework for a non-intrinsic Hamiltonian formulation of linearized Hamiltonian dynamics along Poisson submanifolds of nonzero dimension. This question can be viewed as a part of a general Hamiltonization problem for projectable dynamics on fibered manifolds studied in various situations in \cite{Vor04, Vor05, Vorob05, MaRR-91, DavVor-08}. The main feature of our case is that we have to state the Hamiltonization problem in a class of Poisson algebras which do not define any Poisson structures, in general. This situation is related with the problem of the construction of first order approximations of Poisson structures around Poisson submanifolds \cite{Marcut12, Marcut14} which is only well-studied in the case of symplectic leaves \cite{Vor2001, Vor04}.

Let $S$ be an embedded Poisson submanifold of a Poisson manifold \ec{(M,\{,\}_{M})}. Then, for every \,\ec{H \in \Cinf{M}},\, the Hamiltonian vector field \ec{X_{H}} on $M$ is tangent to $S$ and hence can be linearized along $S$. The linearized procedure for \ec{X_{H}} at $S$ leads to a linear vector field \,\ec{\mathrm{var}_{S}X_{H} \in \X{\mathrm{lin}}(E)}\, on the normal bundle of $S$ defined as a quotient vector bundle \,\ec{E=\T_{S}M/\T{S}}.\, In the zero-dimensional case, when \,\ec{S=\{q\}}\, is a \emph{singular point} of the Poisson structure on $M$, the linear vector field \ec{\mathrm{var}_{S}X_{H}} is Hamiltonian relative to the induced Lie-Poisson bracket on \,\ec{E=\T_{q}M}.\, If \,\ec{\dim S > 0},\, then the linearized dynamical model associated to \ec{\mathrm{var}_{S}X_{H}}, called a \textit{first variation system}, does not inherit any natural Hamiltonian structure from the original Hamiltonian system.

This fact gives rise to the so-called Hamiltonization problem for \ec{\mathrm{var}_{S}X_{H}} which is formulated in a class of Poisson algebras on the space of fiberwise affine functions \ec{\Cinf{\mathrm{aff}}(E)} on $E$. In general, this setting can not be extended to the level of Poisson structures on $E$, because of the following observation due to I. M\u{a}rcut \cite{Marcut12}: a first-order local model for the Poisson structure around the Poisson submanifold $S$ does not always exists. For example, a linearized Poisson model exists in the special case when $S$ is a symplectic leaf \cite{Vor2001}.

By using the infinitesimal data of the Poisson submanifold $S$, we introduce a family of Poisson algebras on \ec{\Cinf{\mathrm{aff}}(E)} whose Lie brackets \ec{\{,\}^{\LL}} are parameterized by transversals $\LL$ of $S$, that is, by subbundles of \ec{\T_{S}M} complementary to $\T{S}$. These algebras are called \emph{infinitesimal Poisson algebras} and, in fact, are independent of $\LL$ modulo isomorphisms. For every $\LL$, the first variation system defines a derivation of the corresponding Poisson algebra. We derive the following criterion for the existence of a Hamiltonian structure for the first variation system of \ec{X_{H}} relative to the underlying class of Poisson algebras.

\begin{criterion}\label{criterion}
If the flow of the Hamiltonian vector field \ec{X_{H}} admits an invariant transversal \,\ec{\LL \subset \T_{S}M}\, of the Poisson submanifold $S$,
    \begin{equation}\label{I1}
        \big(\dd_{q}\mathrm{Fl}_{X_{H}}^{t}\big)(\LL_{q}) \,=\, \LL_{\mathrm{Fl}_{X_{H}}^{t}(q)}, \quad \forall\,q \in S,
    \end{equation}
then the first variation system \ec{\mathrm{var}_{S}X_{H}} is a Hamiltonian derivation of the corresponding infinitesimal Poisson algebra,
    \begin{equation*}
        \dlie{\mathrm{var}_{S}X_{H}}(\cdot) \,=\, \{\phi_{H},\cdot\}^{\LL},
    \end{equation*}
for a certain \,\ec{\phi_{H} \in \Cinf{\mathrm{aff}}(E)}.\, The converse is also true.
\end{criterion}

In the case, when $S$ is a symplectic leaf, this criterion is valid in a class of Poisson structures around $S$, called coupling Poisson structures \cite{Vor04, Vorob05}. Here, we also give an application of this result to the linearization of Hamiltonian group actions at $S$. An interesting question is to extend such a criterion to general Poisson submanifolds using, for example, an approach developed in \cite{FloresPantaYura}, results of \cite{Marcut12, Marcut14} and the recent unpublished results on the existence of local models by R. Fernandes and I. Marcut (available at \href{http://www.unige.ch/math/folks/nikolaev/assets/files/GP-20200409-RuiFernandes.pdf}{http://www.unige.ch/math/folks/nikolaev/assets/files/GP-20200409-RuiFernandes.pdf}).

The paper is organized as follows. In Section 2, we recall the definitions of Poisson submanifolds and their infinitesimal data. In Section 3, we describe a class of infinitesimal Poisson algebras on the space of fiberwise affine functions \ec{C_{\operatorname{aff}}^{\infty}(E)} and formulate a result on the first order approximation of the original Poisson algebra around a Poisson submanifold. In Section 4, we show that a factorization of the Jacobi identity for the infinitesimal Poisson algebras leads to their parametrization by means of the so-called Poisson triples involving contravariant derivatives. In Section 5, we give a proof of the first order approximation result which is based on a correspondence between the Poisson triples and the transversal subbundles over a Poisson submanifold. In Section 6, we recall a linearization procedure for dynamical systems at an invariant submanifold which gives a class of projectable vector fields on the normal bundle determining the first variation systems. Section 7 is devoted to the Hamiltonization problem for first variation systems over a Poisson submanifold. First, we derive a geometric criterion for the existence of Hamiltonian structures and then, give its analytic version formulated as the solvability condition of an associated linear nonhomogeneous differential equation. Finally, in Section 8, we apply the Hamiltonization criterion to the construction of linearized models for Hamiltonian group actions around symplectic leaves.

    \section{Preliminaries}\label{preliminaries}

Here, we recall some facts about Poisson submanifolds; for more details see \cite{We83, M.C.Marle2000, Zambon}.

Let \ec{(M,\Pi)} be a Poisson manifold equipped with a Poisson bivector field \,\ec{\Pi \in \Gamma\wedge^{2}\T{M}}\, and the Poisson bracket
    \begin{equation*}
        \{f,g\}_{M} \,=\, \Pi(\dd{f},\dd{g}), \quad f,g \in \Cinf{M}.
    \end{equation*}
An (immersed) submanifold \,\ec{\iota: S \hookrightarrow M}\, is said to be a \emph{Poisson submanifold} of $M$ if the \emph{Poisson bivector field $\Pi$ is tangent} to $S$:
    \begin{equation}\label{Po1}
        \Pi_{q} \in \wedge^{2}\T_{q}S, \quad \forall\, q \in S.
    \end{equation}
This means that $S$ inherits a (unique) Poisson structure \,\ec{\Pi_{S} \in \Gamma\wedge^{2}\T{S}}\, such that the inclusion $\iota$ is a Poisson map. The corresponding Poisson bracket is denoted by
    \begin{equation*}
        \big\{\bar{f},\bar{g}\big\}_{S} \,:=\, \Pi_{S}\big(\dd{\bar{f}},\dd{\bar{g}}\big), \quad \bar{f},\bar{g} \in \Cinf{S}.
    \end{equation*}

There are several equivalent characterizations of when a submanifold is Poisson. Consider the induced bundle morphism \,\ec{\Pi^{\natural}:\T^{\ast}M \rightarrow \T{M}}\, defined by \,\ec{\alpha \mapsto \Pi^{\natural}(\alpha) := \ii_{\alpha}\Pi},\, and denote by \ec{{\T{S}}^{\circ}} the annihilator of $\T{S}$. Then, condition (\ref{Po1}) can be
reformulated in one of the following ways:
    \begin{equation}\label{Po2}
        \Pi^{\natural}\big({\T{S}}^{\circ}\big) = \{0\} \qquad \text{or} \qquad \Pi^{\natural}\big(\T_{S}^{\ast}M\big) \,\subseteq\, \T{S}.
    \end{equation}
This implies that every Hamiltonian vector field \,\ec{X_{H} = \Pi^{\natural}\dd{H}}\, is \emph{tangent} to $S$. Moreover, if $S$ is an embedded submanifold, then the first condition in (\ref{Po2}) is equivalent to the following: the vanishing ideal \ec{I(S) = \left\{f \in \Cinf{M} \,|\, f|_{S} = 0\right\}} is also an \emph{ideal in the Lie algebra} \ec{(\Cinf{M},\{,\}_{M})}.

Symplectic leaves are the simplest type of Poisson submanifolds. If $S$ is a symplectic leaf of $\Pi$ (i.e., a maximal integral manifold of the characteristic foliation), then \,\ec{\Pi^{\natural}(\T_{S}^{\ast}M) = \T{S}}.\, In this case, the Poisson tensor \ec{\Pi_{S}} is \emph{nondegenerate} and defines a symplectic form \ec{\omega_{S}} on $S$,
    \begin{equation}\label{SS}
        \omega_{S}^{\flat} \,=\, -\big(\Pi_{S}^{\natural}\big)^{-1}.
    \end{equation}

In general, a Poisson submanifold $S$ is the union of open subsets of the symplectic leaves of $\Pi$.

Now, consider the \emph{cotangent Lie algebroid} of the Poisson manifold \ec{(M,\Pi)}:
    \begin{equation}\label{cotangentalgebroid}
        A \,:=\, \left(\T^{\ast}M, [,]_{A}, \Pi^{\natural}: \T^{\ast}M \rightarrow \T{M} \right),
    \end{equation}
where
    \begin{equation*}
        [\alpha,\beta]_{A} \,:=\, \ii_{\Pi^{\natural}(\alpha)}\dd\beta \,-\, \ii_{\Pi^{\natural}(\beta)} \dd\alpha \,-\, \dd\langle \alpha,\Pi^{\natural}(\beta) \rangle
    \end{equation*}
is the Lie bracket for 1-forms on $M$.

The key property is that the cotangent Lie algebroid $A$ \eqref{cotangentalgebroid} admits a natural restriction to the Poisson submanifold $S$ in the sense that there exists a Lie algebroid \ec{A_{S}} over $S$,
    \begin{equation*}
        A_{S} \,:=\, \left(\T_{S}^{\ast}M, [,]_{A_{S}}, \Pi^{\natural}|_{S}:\T_{S}^{\ast}M \rightarrow \T{S} \right),
    \end{equation*}
such that the restriction map \,\ec{\Gamma\,\T^{\ast}M \rightarrow \Gamma\,\T_{S}^{\ast}M}\, is a \emph{Lie algebra homomorphism}. Here, the restrictions of the Lie bracket and the anchor are well-defined because of the property that the Poisson tensor $\Pi$ is tangent to $S$.

We observe that there exists a short exact sequence of Lie algebroids
    \begin{equation*}
        0 \longrightarrow {\T{S}}^{\circ} \longrightarrow A_{S} \longrightarrow \T^{\ast}S \longrightarrow 0,
    \end{equation*}
where \ec{\T^{\ast}S} is the cotangent Lie algebroid of \ec{(S,\Pi_{S})} and \ec{{\T{S}}^{\circ}} is a Lie algebroid with zero anchor. The last fact is a consequence of property (\ref{Po2}) which reads as
    \begin{equation*}
        {\T{S}}^{\circ} \,\subseteq\, \ker\big(\Pi^{\natural}|_{S}\big).
    \end{equation*}
It follows also that the annihilator \ec{{\T{S}}^{\circ}} is an \emph{ideal} in \ec{A_{S}}.

So, follow \cite{IKV-98, Marcut12, Marcut14}; by the \emph{infinitesimal data} of the Poisson submanifold $S$ we will mean the restricted Lie algebroid \ec{A_{S}}. In the case when $S$ is a symplectic leaf, \ec{A_{S}} is a transitive Lie algebroid \cite{Mac, Vor04, DVY-12}.

    \section{Infinitesimal Poisson Algebras}\label{infinitesimal Poisson algebras}

Suppose we start with an embedded Poisson submanifold \ec{(S,\Pi_{S})} of a Poisson manifold \ec{(M,\Pi)}. By using the infinitesimal data of $S$, our point is to construct a Poisson algebra \ec{P_{1}} which gives a \emph{first-order approximation} to the original one
    \begin{equation}\label{P1}
        P = \big(\Cinf{M},\cdot,\{,\}_{M}\big)
    \end{equation}
in some natural sense.

Consider the normal bundle of $S$
    \begin{equation*}
        E \,:=\, \T_{S}M \,/\, \T{S}, \qquad \pi:E \longrightarrow S,
    \end{equation*}
and the co-normal (dual) bundle \,\ec{E^{\ast} \rightarrow S}.\, Denote by
    \begin{equation}\label{EcProyCoc}
        \nu: \T_{S}M \longrightarrow E
    \end{equation}
the quotient projection.

Consider a $\Cinf{S}$-module \emph{of fiberwise affine} $\Cinf{}$-\emph{functions} on $E$:
    \begin{equation*}
        \Cinf{\mathrm{aff}}(E) \,:=\, \pi^{\ast}\Cinf{S} \oplus \Cinf{\mathrm{lin}}(E) \,\simeq\, \Cinf{S} \oplus \Gamma{E^{\ast}}.
    \end{equation*}
So, every element \,\ec{\phi \in \Cinf{\mathrm{aff}}(E)}\, is represented as
    \begin{equation*}
        \phi \,=\, \pi^{\ast}f + \ell_{\eta} \,\simeq\, f \oplus \eta,
    \end{equation*}
where \,\ec{f \in \Cinf{S}}\, and \,\ec{\eta \in \Gamma{E^{\ast}}}.\, Here we use the canonical identification \,\ec{\ell:\Gamma{E^{\ast}} \rightarrow \Cinf{\mathrm{lin}}(E)}\, given by \,\ec{\ell_{\eta}(z) = \langle \eta_{\pi(z)},z \rangle},\, for \,\ec{z \in E}.\, First, we remark that \ec{\Cinf{\mathrm{aff}}(E)} is a \emph{commutative algebra} with ``infinitesimal'' multiplication
    \begin{equation}\label{LB2}
        \phi_{1} \cdot \phi_{2} \,=\, \pi^{\ast}(f_{1}f_{2}) \,+\, \ell_{(f_{1}\eta_{2} + f_{2}\eta_{1})}
    \end{equation}
or, equivalently,
    \begin{equation}\label{LB1}
        (f_{1} \oplus \eta_{1}) \cdot (f_{2} \oplus \eta_{2}) \,=\, f_{1}f_{2} \oplus (f_{1}\eta_{2} + f_{2}\eta_{1}).
    \end{equation}
Let \,\ec{\iota_{0}:S \hookrightarrow E}\, be the zero section of the normal bundle. Then, we have the canonical splitting
    \begin{equation}\label{CS1}
        \T_{S}E \,=\, \T{S} \oplus E,
    \end{equation}
and the projection \,\ec{\T_{S}E \rightarrow E}\, along $\T{S}$ whose adjoint gives a vector bundle morphism \,\ec{E^{\ast} \rightarrow \T_{S}^{\ast}E}.\, On the other hand, we have the dual decomposition of (\ref{CS1})
    \begin{equation}\label{CS2}
        \T_{S}^{\ast}E \,=\, E^{\circ} \oplus {\T{S}}^{\circ},
    \end{equation}
and the projection \,\ec{\mathrm{pr}:\T_{S}^{\ast}E \rightarrow {\T{S}}^{\circ}}\, along \ec{E^{\circ}}. Then, decomposition (\ref{CS2}) induces the vector bundle isomorphism \,\ec{\chi: E^{\ast} \rightarrow {\T{S}}^{\circ} \varhookrightarrow \T_{S}^{\ast}E}.\, Now, we define a linearization map
    \begin{equation*}
        \mathrm{Aff}: \Cinf{E} \longrightarrow \Cinf{\mathrm{aff}}(E), \qquad F \,\longmapsto\, \mathrm{Aff}(F) \,=\, \pi^{\ast}f + \ell_{\eta},
    \end{equation*}
with \,\ec{f = \iota_{0}^{\ast}F}\, and \,\ec{\eta = \chi^{-1} \circ \mathrm{pr}(\dd{F}|_{S})}.\, Here \,\ec{\dd{F}|_{S} \in \Gamma\,\T_{S}^{\ast}E}\, is the restricted differential of \,\ec{F \in \Cinf{E}}.\, It is easy to see that $\mathrm{Aff}$ is a homomorphism of commutative algebras.

Now, consider the $\Cinf{S}$-module of \emph{fiberwise linear functions} \ec{\Cinf{\mathrm{lin}}(E)} and the $\Cinf{S}$-module isomorphism
    \begin{equation*}
        \Cinf{\mathrm{lin}}(E) \,\overset{\ell^{-1}}{\longrightarrow}\, \Gamma{E^{\ast}} \,\overset{\chi}{\longrightarrow}\, \Gamma\,{\T{S}}^{\circ}.
    \end{equation*}
Then, the bracket on the Lie algebroid \ec{A_{S}} induces an \emph{intrinsic Lie algebra structure} on \ec{\Cinf{\mathrm{lin}}(E)}:
    \begin{equation*}
        \{\varphi_{1},\varphi_{2}\}^{\operatorname{lin}} \,:=\, \ell\circ\chi^{-1}\left(\big[\chi \circ \ell^{-1}(\varphi_{1}),\chi \circ \ell^{-1}(\varphi_{2})\big]_{A_{S}}\right).
    \end{equation*}
This bracket together with trivial (zero) multiplication on \ec{\Cinf{\mathrm{lin}}(E)} defines a Poisson algebra structure.

It is useful also to given an alternative description of \ec{\Cinf{\mathrm{lin}}(E)}. Indeed, for any \,\ec{\eta_{1},\eta_{2} \in \Gamma{E^{\ast}}}\, define the bracket
    \begin{equation}\label{FLB}
        [\eta_{1},\eta_{2}]_{E^{\ast}} \,=\, \chi^{-1}\big(\,[\chi(\eta_{1}),\chi(\eta_{2})]_{A_{S}}\big),
    \end{equation}
which is $\Cinf{S}$-bilinear. This follows from \eqref{Po2}. Therefore, the co-normal bundle \ec{E^{\ast}} over $S$ inherits from \ec{[,]_{A_{S}}} a fiberwise Lie bracket \,\ec{S \ni q \mapsto [,]_{E_{q}^{\ast}}}\, smoothly varying with \,\ec{q \in S}.\, In other hand, the co-normal bundle \ec{E^{\ast}} is a \emph{bundle of Lie algebras} (not necessarily locally trivial). Moreover, this gives rise to a \emph{Lie-Poisson structure} (a \emph{vertical Lie-Poisson tensor}) on $E$.

\begin{example}
If $S$ is a symplectic leaf, then the bundle of Lie algebras \ec{(E^{\ast},[,]_{E^{\ast}})} is \emph{locally trivial} and the corresponding typical fiber is called the \emph{isotropy algebra} of the leaf.
\end{example}

So, taking into account that we have two intrinsic Poisson algebras $\Cinf{S}$ and \ec{\Cinf{\mathrm{lin}}(E)} associated with the Poisson
submanifold $S$, we arrive at the following definition.

\begin{definition}
By an \emph{infinitesimal Poisson algebra (IPA)} we mean a Poisson algebra
    \begin{equation}\label{AF1}
        \big(\Cinf{\mathrm{aff}}(E )= \pi^{\ast}\Cinf{S} \oplus \Cinf{\mathrm{lin}}(E),\cdot,\{,\}^{\mathrm{aff}}\,\big),
    \end{equation}
which consists of the commutative algebra \ec{(\Cinf{\mathrm{aff}}(E),\cdot)} in (\ref{LB2}) and a Lie bracket \ec{\{,\}^{\mathrm{aff}}} on \ec{\Cinf{\mathrm{aff}}(E)} satisfying the conditions:
    \begin{itemize}
      \item [(a)] the natural projection \,\ec{\Cinf{\mathrm{aff}}(E) \rightarrow \Cinf{S}}\, is a Poisson algebra homomorphism,
      \item [(b)] for any \,\ec{\varphi_{1},\varphi_{2} \in \Cinf{\mathrm{lin}}(E)},\, we have
                    \begin{equation*}
                        \{0 \oplus \varphi_{1},0 \oplus \varphi_{2}\}^{\mathrm{aff}} \,=\, 0 \oplus \{\varphi_{1},\varphi_{1}\}^{\mathrm{lin}}.
                    \end{equation*}
    \end{itemize}
\end{definition}

Observe that for any infinitesimal Poisson algebra, we have an short exact sequence of Poisson algebras
    \begin{equation*}
        0 \longrightarrow \Cinf{\mathrm{lin}}(E) \varlonghookrightarrow \Cinf{\mathrm{aff}}(E) \longrightarrow \Cinf{S} \longrightarrow 0,
    \end{equation*}
where \ec{\Cinf{\mathrm{lin}}(E)} is an ideal.

To end this section we give a positive answer to the question on the existence of a first order approximation of the Poisson algebra (\ref{P1}) around an embedded Poisson submanifold.

By an exponential map we mean a diffeomorphism \,\ec{\mathbf{e}:E \rightarrow M}\, from the total space of the normal bundle onto a neighborhood of $S$ in $M$ which is identical on $S$, \,\ec{\mathbf{e}|_{S} = \mathrm{id}_{S}},\, and such that the composition
    \begin{equation*}
        E_{q} \,\varlonghookrightarrow\, \T_{q}E \,\overset{\dd_{q}\mathbf{e}}{\longrightarrow}\, \T_{q}M \,\overset{\nu_{q}}{\longrightarrow}\, E_{q}
    \end{equation*}
is the \emph{identity map} of the fiber \,\ec{E_{q}=\pi^{-1}(q)}\, over \,\ec{q \in S}.\, An exponential map always exists \cite{LibMar-87}.

\begin{theorem}\label{thmIPA}
For every (embedded) Poisson submanifold \,\ec{S \subset M}\, and an exponential map \,\ec{\mathbf{e}:E \rightarrow M},\, there exists an infinitesimal Poisson algebra \,\ec{P_{1} = (\Cinf{\mathrm{aff}}(E),\cdot,\{,\}^{\mathrm{aff}})},\, which is a first order approximation to \,\ec{P = (\Cinf{M},\cdot,\{,\}_{M})}\, around the zero section \,\ec{S \hookrightarrow E},\, in the sense that
    \begin{equation}\label{AP1}
        \{\phi_{1} \circ \mathbf{e}^{-1},\phi_{2} \circ \mathbf{e}^{-1}\}_{M} \circ \mathbf{e} \,=\, \{\phi_{1},\phi_{2}\}^{\mathrm{aff}} + \OO_{2},
    \end{equation}
for all \,\ec{\phi_{1},\phi_{2} \in \Cinf{\mathrm{aff}}(E)}.
\end{theorem}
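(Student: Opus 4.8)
\emph{Proof proposal.} The plan is to transport the ambient Poisson bracket to the normal bundle through the exponential map and then to realize the infinitesimal Poisson algebra as the quotient of the transported structure by the square of the vanishing ideal of the zero section. First I would set, for functions $F,G\in\Cinf{E}$ defined near $\iota_{0}(S)$,
\[
\{F,G\}_{E}\,:=\,\{F\circ\mathbf{e}^{-1},G\circ\mathbf{e}^{-1}\}_{M}\circ\mathbf{e}.
\]
Because $\mathbf{e}$ is a diffeomorphism onto a neighbourhood of $S$ with $\mathbf{e}|_{S}=\mathrm{id}_{S}$, this is a genuine Poisson bracket and the zero section is a Poisson submanifold of $(E,\{,\}_{E})$ inducing $\{,\}_{S}$ on $S$.

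Next I would use the characterization (\ref{Po2}): the vanishing ideal $\mathcal{I}\subset\Cinf{E}$ of the zero section is a Lie ideal, $\{\mathcal{I},\Cinf{E}\}_{E}\subseteq\mathcal{I}$. A one-line Leibniz argument upgrades this to $\{\mathcal{I}^{2},\Cinf{E}\}_{E}\subseteq\mathcal{I}^{2}$, since $\{gh,K\}_{E}=g\{h,K\}_{E}+h\{g,K\}_{E}\in\mathcal{I}\cdot\mathcal{I}$ for $g,h\in\mathcal{I}$. Hence $\mathcal{I}^{2}$ is a Poisson ideal and $\Cinf{E}/\mathcal{I}^{2}$ is a Poisson algebra. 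The point is that $\mathrm{Aff}$ is a section of the projection $\Cinf{E}\to\Cinf{E}/\mathcal{I}^{2}$ identifying the quotient with $\Cinf{\mathrm{aff}}(E)$: one has $F-\mathrm{Aff}(F)\in\mathcal{I}^{2}$, and the fibrewise-quadratic terms dropped by the truncated product (\ref{LB2}) are exactly the elements of $\mathcal{I}^{2}$, so $\mathrm{Aff}$ is an isomorphism of commutative algebras onto $(\Cinf{\mathrm{aff}}(E),\cdot)$. Transporting the quotient bracket through $\mathrm{Aff}$ defines $\{\phi_{1},\phi_{2}\}^{\mathrm{aff}}:=\mathrm{Aff}(\{\phi_{1},\phi_{2}\}_{E})$, for which the Leibniz rule and the Jacobi identity come for free as the quotient of a Poisson algebra. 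The approximation (\ref{AP1}) is then immediate, since $\{\phi_{1},\phi_{2}\}_{E}-\{\phi_{1},\phi_{2}\}^{\mathrm{aff}}\in\mathcal{I}^{2}=\OO_{2}$.

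It remains to verify the two IPA axioms. Axiom (a) holds because the projection $\Cinf{\mathrm{aff}}(E)\to\Cinf{S}$ is the further quotient $\Cinf{E}/\mathcal{I}^{2}\to\Cinf{E}/\mathcal{I}=\Cinf{S}$, a Poisson map since $\mathcal{I}$ is a Poisson ideal, and the induced bracket on $S$ is $\{,\}_{S}$ because $\mathbf{e}|_{S}=\mathrm{id}_{S}$. Axiom (b) is where I expect the real work, and the main obstacle: I must show that the linearized bracket of two fibrewise linear functions, computed through $\mathbf{e}$, agrees with the intrinsic bracket $\{,\}^{\mathrm{lin}}$ built from $[,]_{A_{S}}$, and in particular does not depend on $\mathbf{e}$. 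In adapted coordinates $(x^{i},y^{a})$ with $S=\{y=0\}$, tangency of $\Pi$ forces the fibre components of the transported bivector to vanish on $S$, so $\Pi_{E}^{ab}(x,y)=c^{ab}_{c}(x)\,y^{c}+\OO_{2}$, and a short computation gives the linear part of $\{\ell_{\eta_{1}},\ell_{\eta_{2}}\}_{E}$ as $c^{ab}_{c}\,\eta_{1,a}\eta_{2,b}$. On the infinitesimal side, the cotangent-algebroid bracket of the closed forms $\dd y^{a},\dd y^{b}$ reduces to the restriction of $\dd\big(\Pi_{E}(\dd y^{a},\dd y^{b})\big)$, whose conormal component is governed by the same $c^{ab}_{c}$; under $\chi$ this is exactly $\{,\}^{\mathrm{lin}}$. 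The crux is that the defining property of an exponential map, namely that $\nu_{q}\circ\dd_{q}\mathbf{e}$ restricts to $\mathrm{id}_{E_{q}}$, pins down the first transverse fibre jet of $\Pi_{E}$ along $S$; this makes the structure functions $c^{ab}_{c}$, and hence axiom (b), independent of $\mathbf{e}$, while the remaining freedom affects only the mixed and base first-order terms — precisely the parametrization by transversals $\LL$ announced in the introduction.
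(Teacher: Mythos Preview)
Your argument is correct and is genuinely different from the paper's. The paper proves Theorem~\ref{thmIPA} constructively: it first introduces \emph{Poisson triples} $([,]_{E^{\ast}},\D,\mathscr{K})$ and shows (Lemma~\ref{LemaTripleToAlg}) that each such triple produces an IPA via the explicit bracket~\eqref{PLB1}; it then shows (Lemma~\ref{LemaTransversalTriple}) that every transversal $\LL$ of $S$ manufactures a Poisson triple out of the Lie algebroid $A_{S}$, and finally matches the exponential map $\mathbf{e}$ to the particular transversal $\LL_{q}=(\dd_{q}\mathbf{e})(E_{q})$ to obtain~\eqref{AP1}. Your route instead realizes the IPA as the quotient $\Cinf{E}/\mathcal{I}^{2}$ of the transported Poisson algebra, so that the Jacobi and Leibniz identities come for free and \eqref{AP1} is tautological; the only substantive check is axiom~(b), and your observation that the exponential-map normalization $\nu_{q}\circ\dd_{q}\mathbf{e}|_{E_{q}}=\mathrm{id}$ forces $(\dd\mathbf{e})^{\ast}\circ\nu^{\ast}=\chi$, hence identifies the conormal structure constants $c^{ab}_{c}$ with those of $[,]_{A_{S}}$ on $\T S^{\circ}$, is exactly what is needed. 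The paper in fact acknowledges your viewpoint in Remark~\ref{remark:cociente}, noting that $P_{1}^{\LL}\simeq\Cinf{M}/I^{2}(S)$. What the paper's longer route buys is the explicit coupling formula~\eqref{PLB1} and the parametrization by transversals, both of which are used essentially in Sections~\ref{the hamiltonization problem}--\ref{the case of a symplectic leaf} (the Hamiltonization criterion is phrased in terms of $\D^{\LL}$ and $\mathscr{K}^{\LL}$); your approach gives a cleaner existence proof but would still need those formulas to be extracted afterwards for the applications.
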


Observe that condition (\ref{AP1}) can be reformulated as follows: the mapping
    \begin{equation}\label{AP2}
        \operatorname{Aff} \circ\, \mathbf{e}^{\ast}: \Cinf{M} \rightarrow \Cinf{\mathrm{aff}}(E)
    \end{equation}
is a Poisson algebra homomorphism.

The proof of this fact will be given in the next sections.

    \section{Poisson Triples}\label{Poisson triples}

Here, we describe a structure of infinitesimal Poisson algebras by using the notion of a \emph{contravariant derivative} on a vector bundle over a Poisson manifold introduced in \cite{Va90} (see also \cite{Va94, Fer2000}).

Consider the co-normal bundle \ec{E^{\ast}} over the Poisson submanifold \,\ec{S \subset M}.\, Recall that a contravariant derivative $\D$ on \ec{E^{\ast}} consists of $\R{}$-linear operators \,\ec{\D_{\alpha}:\Gamma{E^{\ast}} \rightarrow \Gamma{E^{\ast}}}\, which are $\Cinf{S}$-linear in \,\ec{\alpha \in \Gamma\,\T^{\ast}S}\, and satisfy the Leibniz-type rule
    \begin{equation*}
        \D_{\alpha}(f\eta) \,=\, f\D_{\alpha}(\eta) \,+\, \big(\dlie{\Pi_{S}^{\natural}(\alpha)}f\big)\eta.
    \end{equation*}
for \,\ec{f \in \Cinf{S}}, \,\ec{\eta \in \Gamma{E^{\ast}}}.\, The curvature \ec{\mathrm{Curv}^{\D}} of $\D$ is defined as
    \begin{equation*}
        \mathrm{Curv}^{\D}(\alpha_{1},\alpha_{2}) \,:=\, [\D_{\alpha_{1}},\D_{\alpha_{2}}] - \D_{[\alpha_{1},\alpha_{2}]_{\T^{\ast}S}}.
    \end{equation*}
Here, \ec{[,]_{\T^{\ast}S}} denotes the Lie bracket for 1-forms on the Poisson manifold \ec{(S,\Pi_{S})}.

\begin{remark}
Every covariant derivative (linear connection) \,\ec{\nabla:\Gamma\,\T{S} \times \Gamma{E^{\ast}} \rightarrow \Gamma{E^{\ast}}}\, induces a contravariant derivative $\D$ which is defined as
    \begin{equation}\label{CC1}
        \D_{\alpha} = \nabla_{\Pi_{S}^{\natural}(\alpha)},
    \end{equation}
and satisfies the following property:
    \begin{equation}\label{CC2}
        \Pi_{S}^{\natural}(\alpha) = 0 \quad \Longrightarrow \quad \D_{\alpha} =\, 0.
    \end{equation}
In general, condition (\ref{CC2}) does not imply the existence of a covariant derivative satisfying (\ref{CC1}) (for more details, see \cite{Fer2000}).
\end{remark}

Now, suppose we are given a triple \ec{\big([,]_{E^{\ast}},\D,\mathscr{K}\big)} consisting of
    \begin{itemize}
    \item the fiberwise Lie algebra bracket \ec{[,]_{E^{\ast}}} on \ec{E^{\ast}} given by (\ref{FLB}),
    \item a contravariant derivative \,\ec{\D:\Gamma\,\T^{\ast}S \times \Gamma{E^{\ast}} \rightarrow \Gamma{E^{\ast}}}\, on the co-normal bundle \ec{E^{\ast}} over the Poisson manifold \ec{(S,\Pi_{S})},
    \item a $\Cinf{S}$-bilinear antisymmetric mapping \,\ec{\mathscr{K}: \Gamma\,\T^{\ast}S \times \Gamma\,\T^{\ast}S \rightarrow \Gamma{E^{\ast}}}.
    \end{itemize}

Assume that the triple \ec{\big([,]_{E^{\ast}},\D,\mathscr{K}\big)} satisfies the following conditions:
    \begin{align}
            & \hspace{0.88cm} [\D_{\alpha},\mathrm{ad}_{\eta}] \,=\, \mathrm{ad}_{\D_{\alpha}\eta}, \label{Ja1} \\[0.15cm]
            & \hspace{0.88cm} \mathrm{Curv}^{\D}(\alpha,\beta) \,=\, \mathrm{ad}_{\mathscr{K}(\alpha,\beta)}, \label{Ja3} \\[0.15cm]
            & \underset{(\alpha,\beta,\gamma)}{\mathfrak{S}} \D_{\alpha}\mathscr{K}(\beta,\gamma) \,+\, \mathscr{K}(\alpha,[\beta,\gamma]_{\T^{\ast}S}) \,=\, 0, \label{Ja2}
    \end{align}
for all \,\ec{\alpha,\beta,\gamma \in \Gamma\,\T^{\ast}S}, \,\ec{\eta \in \Gamma{E^{\ast}}}.\, Here, \,\ec{\mathrm{ad}_{\eta}(\cdot) := [\eta,\cdot]_{E^{\ast}}}.

\begin{definition}
A setup \ec{\big([,]_{E^{\ast}},\D,\mathscr{K}\big)} satisfying (\ref{Ja1})-(\ref{Ja2}) is said to be a \emph{Poisson triple} of a Poisson submanifold \ec{(S,\Pi_{S})} in \ec{(M,\Pi)}.
\end{definition}

Here we arrive at the basic fact.

\begin{lemma}\label{LemaTripleToAlg}
Every Poisson triple \ec{\big([,]_{E^{\ast}},\D,\mathscr{K}\big)} of a Poisson submanifold \ec{S \subset M} induces an infinitesimal
Poisson algebra \,\ec{(\Cinf{\mathrm{aff}}(E) \simeq \Cinf{S} \oplus \Gamma{E^{\ast}},\cdot,\{,\}^{\mathrm{aff}})}\, with multiplication (\ref{LB1}) and the Lie bracket given by
    \begin{equation}\label{PLB1}
        \{f_{1} \oplus \eta_{1},f_{2} \oplus \eta_{2}\}^{\mathrm{aff}} \,:=\, \{f_{1},f_{2}\}_{S} \oplus \big( \D_{\dd{f_{1}}}\eta_{2} - \D_{\dd{f_{2}}}\eta_{1} + [\eta_{1},\eta_{2}]_{E^{\ast}} + \mathscr{K}(\dd{f_{1}},\dd{f_{2}}) \big).
    \end{equation}
\end{lemma}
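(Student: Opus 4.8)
The plan is to verify, for the bracket $\{,\}^{\mathrm{aff}}$ defined by \eqref{PLB1} together with the multiplication \eqref{LB1}, the four properties that make $\big(\Cinf{\mathrm{aff}}(E),\cdot,\{,\}^{\mathrm{aff}}\big)$ an infinitesimal Poisson algebra: (i) $\{,\}^{\mathrm{aff}}$ is a well-defined, $\mathbb{R}$-bilinear, skew-symmetric bracket into $\Cinf{\mathrm{aff}}(E)$; (ii) it is a derivation of the commutative product; (iii) it obeys the Jacobi identity; and (iv) it satisfies conditions (a) and (b) of the definition of an IPA. Property (i) is immediate: each of the four summands in the linear component of \eqref{PLB1} lies in $\Gamma E^{\ast}$, so the bracket maps into $\Cinf{S}\oplus\Gamma E^{\ast}$, and skew-symmetry follows term by term from the skew-symmetry of $\{,\}_{S}$, of $\D_{\dd{f_{1}}}\eta_{2}-\D_{\dd{f_{2}}}\eta_{1}$ under exchange, of $[,]_{E^{\ast}}$, and of $\mathscr{K}$. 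Property (iv) can be read off the shape of \eqref{PLB1}: the base component is always $\{f_{1},f_{2}\}_{S}$, giving (a), while setting $f_{1}=f_{2}=0$ collapses \eqref{PLB1} to $0\oplus[\eta_{1},\eta_{2}]_{E^{\ast}}$, which is $0\oplus\{\cdot,\cdot\}^{\mathrm{lin}}$ under the identification $\varphi_{i}=\ell_{\eta_{i}}$ and the definition \eqref{FLB}, giving (b).

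For the Leibniz rule (ii) I would expand $\{f_{1}\oplus\eta_{1},(f_{2}\oplus\eta_{2})\cdot(f_{3}\oplus\eta_{3})\}^{\mathrm{aff}}$ using \eqref{LB1} and compare it with $\{\phi_{1},\phi_{2}\}^{\mathrm{aff}}\cdot\phi_{3}+\phi_{2}\cdot\{\phi_{1},\phi_{3}\}^{\mathrm{aff}}$. The base components agree by the Leibniz rule for $\{,\}_{S}$. For the linear components, the only nontrivial input is the Leibniz-type rule for the contravariant derivative together with the identity $\dlie{\Pi_{S}^{\natural}(\dd{f_{1}})}f_{2}=\{f_{1},f_{2}\}_{S}$; all remaining terms match because $\D$ is $\Cinf{S}$-linear in its form slot while $[,]_{E^{\ast}}$ and $\mathscr{K}$ are $\Cinf{S}$-bilinear. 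This is a routine bookkeeping calculation with no genuine obstacle.

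The heart of the proof is the Jacobi identity (iii), and here the idea is to exploit the grading of $\Cinf{\mathrm{aff}}(E)$ by the number of linear factors. Since the Jacobiator is $\mathbb{R}$-trilinear, it suffices to evaluate it on generators of the form $f\oplus 0$ and $0\oplus\eta$, which splits the verification into four cases according to how many arguments are purely linear; the single tool linking the two gradings is the cotangent-algebroid identity $[\dd{f},\dd{g}]_{\T^{\ast}S}=\dd\{f,g\}_{S}$. In the all-base case the base component is the Jacobiator of $\{,\}_{S}$ (which vanishes), and the linear component collects to $\mathfrak{S}_{(\dd{f_{1}},\dd{f_{2}},\dd{f_{3}})}\big(\D_{\dd{f_{1}}}\mathscr{K}(\dd{f_{2}},\dd{f_{3}})+\mathscr{K}(\dd{f_{1}},[\dd{f_{2}},\dd{f_{3}}]_{\T^{\ast}S})\big)$, which is exactly \eqref{Ja2}. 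With two base and one linear argument, the cyclic sum produces $\big(\D_{\dd{f_{2}}}\D_{\dd{f_{3}}}-\D_{\dd{f_{3}}}\D_{\dd{f_{2}}}-\D_{[\dd{f_{2}},\dd{f_{3}}]_{\T^{\ast}S}}\big)\eta_{1}+[\eta_{1},\mathscr{K}(\dd{f_{2}},\dd{f_{3}})]_{E^{\ast}}=\mathrm{Curv}^{\D}(\dd{f_{2}},\dd{f_{3}})\eta_{1}-\mathrm{ad}_{\mathscr{K}(\dd{f_{2}},\dd{f_{3}})}\eta_{1}$, which vanishes by \eqref{Ja3}. With one base and two linear arguments, the sum reduces, via the derivation property \eqref{Ja1} of $\D$ over $\mathrm{ad}$, to $[\D_{\dd{f_{1}}}\eta_{2},\eta_{3}]_{E^{\ast}}+[\eta_{3},\D_{\dd{f_{1}}}\eta_{2}]_{E^{\ast}}=0$. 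Finally, the all-linear case is the Jacobi identity for the fiberwise Lie bracket $[,]_{E^{\ast}}$, which holds since $E^{\ast}$ is a bundle of Lie algebras inherited from $A_{S}$.

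The main obstacle is organizational rather than computational: setting up the degree decomposition cleanly and matching each homogeneous sector to precisely one of the structural conditions \eqref{Ja1}--\eqref{Ja2}. The one genuine subtlety to flag is that \eqref{PLB1} feeds $\D$ and $\mathscr{K}$ only exact one-forms $\dd{f_{i}}$, so that the passage $\dd\{f,g\}_{S}=[\dd{f},\dd{g}]_{\T^{\ast}S}$ is exactly what lets the cyclic sums be recognized as instances of \eqref{Ja1}--\eqref{Ja2}; the $\Cinf{S}$-tensoriality of $\D$ in the form slot, of $[,]_{E^{\ast}}$, and of $\mathscr{K}$ then guarantees that restricting these conditions to exact differentials loses no information.
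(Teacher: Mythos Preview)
Your proposal is correct and follows precisely the approach indicated in the paper, which simply states that the proof is ``a direct verification that conditions \eqref{Ja1}--\eqref{Ja2} give a factorization of the Jacobi identity for bracket \eqref{PLB1}.'' You have carried out that verification in full---splitting the Jacobiator by homogeneous type and matching each sector to one of the structural identities---so there is nothing to add.
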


The proof of this fact is a direct verification that conditions (\ref{Ja1})-(\ref{Ja2}) give a factorization of the Jacobi identity for bracket (\ref{PLB1}).

Using formula (\ref{PLB1}), one can show that the converse is also true; that is, each infinitesimal Poisson algebra induces a Poisson triple.

\begin{corollary}\label{corIPAPT}
There is a one-to-one correspondence between infinitesimal Poisson algebras and Poisson triples.
\end{corollary}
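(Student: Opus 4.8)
The plan is to upgrade Lemma~\ref{LemaTripleToAlg} to a bijection by exhibiting an explicit inverse: starting from an arbitrary infinitesimal Poisson algebra I recover a Poisson triple and then check that the two assignments are mutually inverse. Fix an IPA $(\Cinf{\mathrm{aff}}(E)\simeq\Cinf{S}\oplus\Gamma{E^{\ast}},\cdot,\{,\}^{\mathrm{aff}})$ and split every bracket of affine functions into its four bihomogeneous pieces according to $\Cinf{\mathrm{aff}}(E)=\Cinf{S}\oplus\Gamma{E^{\ast}}$. Condition~(a) says the projection onto $\Cinf{S}$ is a Poisson homomorphism, so the $\Cinf{S}$-component of $\{f_{1}\oplus\eta_{1},f_{2}\oplus\eta_{2}\}^{\mathrm{aff}}$ depends only on $f_{1},f_{2}$ and equals $\{f_{1},f_{2}\}_{S}$; in particular the mixed pieces $\{f\oplus0,0\oplus\eta\}^{\mathrm{aff}}$ and the pure-linear piece $\{0\oplus\eta_{1},0\oplus\eta_{2}\}^{\mathrm{aff}}$ are purely linear. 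Condition~(b) identifies the latter with $0\oplus[\eta_{1},\eta_{2}]_{E^{\ast}}$, the fixed fiberwise bracket~\eqref{FLB}. This already pins down $[,]_{E^{\ast}}$, so only $\D$ and $\mathscr{K}$ remain to be extracted.

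Next I would define, on exact forms, $0\oplus\D_{\dd{f}}\eta:=\{f\oplus0,0\oplus\eta\}^{\mathrm{aff}}$ and $\{f_{1},f_{2}\}_{S}\oplus\mathscr{K}(\dd{f_{1}},\dd{f_{2}}):=\{f_{1}\oplus0,f_{2}\oplus0\}^{\mathrm{aff}}$. The decomposition of a general bracket into the four pieces then reads exactly as~\eqref{PLB1}, using antisymmetry to turn $\{0\oplus\eta_{1},f_{2}\oplus0\}^{\mathrm{aff}}$ into $-\D_{\dd{f_{2}}}\eta_{1}$ (antisymmetry also forces $\mathscr{K}(\dd{f_{1}},\dd{f_{2}})=-\mathscr{K}(\dd{f_{2}},\dd{f_{1}})$). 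The crucial verifications are the tensorial properties. Applying the Poisson--Leibniz rule of $\{,\}^{\mathrm{aff}}$ with respect to the multiplication~\eqref{LB1} to $\{f\oplus0,(g\oplus0)\cdot(0\oplus\eta)\}^{\mathrm{aff}}$ yields the contravariant Leibniz rule $\D_{\dd{f}}(g\eta)=g\,\D_{\dd{f}}\eta+\{f,g\}_{S}\,\eta$, while applying it in the first slot to $(f\oplus0)\cdot(g\oplus0)$ gives $\D_{\dd(fg)}\eta=f\,\D_{\dd{g}}\eta+g\,\D_{\dd{f}}\eta$. Likewise, the derivation property in the first argument of the pure-function bracket forces the $\Cinf{S}$-linearity $\mathscr{K}(g\,\dd{f_{1}}+f_{1}\,\dd{g},\dd{f_{2}})=g\,\mathscr{K}(\dd{f_{1}},\dd{f_{2}})+f_{1}\,\mathscr{K}(\dd{g},\dd{f_{2}})$.

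The hard part will be turning these identities into well-defined global objects, i.e.\ passing from ``defined on exact forms'' to ``defined on all of $\Gamma\,\T^{\ast}S$.'' For $\D$, the two Leibniz relations say that, for fixed $\eta$, the map sending $f$ to the $E^{\ast}$-component of $\{f\oplus0,0\oplus\eta\}^{\mathrm{aff}}$ is an $\mathbb{R}$-linear derivation of $\Cinf{S}$ valued in the module $\Gamma{E^{\ast}}$; by the universal property of the cotangent bundle (derivations of $\Cinf{S}$ into a module factor uniquely through $\dd$) it extends uniquely to a $\Cinf{S}$-linear $\D_{\alpha}\eta$ still satisfying the contravariant Leibniz rule, which is precisely a contravariant derivative. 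For $\mathscr{K}$, the displayed $\Cinf{S}$-linearity together with antisymmetry shows the assignment is tensorial, hence descends to a $\Cinf{S}$-bilinear antisymmetric bundle map $\mathscr{K}:\Gamma\,\T^{\ast}S\times\Gamma\,\T^{\ast}S\to\Gamma{E^{\ast}}$; in particular its value at a point depends only on the pointwise values of the forms, which also guarantees independence of the chosen primitives $f_{i}$.

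Finally I would check that $([,]_{E^{\ast}},\D,\mathscr{K})$ is genuinely a Poisson triple. Since the bracket now has the form~\eqref{PLB1}, I can invoke the factorization of the Jacobi identity established in the proof of Lemma~\ref{LemaTripleToAlg}, read in the opposite direction: the Jacobi identity for $\{,\}^{\mathrm{aff}}$ is equivalent to the simultaneous validity of~\eqref{Ja1}, \eqref{Ja3} and~\eqref{Ja2}, so these hold automatically. This produces the inverse map IPA $\to$ Poisson triple. That it is a two-sided inverse is immediate: starting from a triple, formula~\eqref{PLB1} shows the extraction returns $[,]_{E^{\ast}}$, $\D$ and $\mathscr{K}$ unchanged (its three bihomogeneous components isolate them), and starting from an IPA the reconstruction~\eqref{PLB1} reproduces the original bracket. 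Hence the assignment of Lemma~\ref{LemaTripleToAlg} is a bijection.
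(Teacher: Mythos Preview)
Your proposal is correct and follows exactly the route the paper indicates: the paper itself offers no detailed argument, merely remarking that ``using formula~\eqref{PLB1}, one can show that the converse is also true'' before stating the corollary, and you have supplied precisely those details---extracting $\D$ and $\mathscr{K}$ from the mixed and pure-function components of the bracket, using the Leibniz rule for~\eqref{LB1} to establish tensoriality and extend from exact forms, and then reading the Jacobi factorization of Lemma~\ref{LemaTripleToAlg} in the reverse direction. The two-sided inverse check you give is the natural one and completes the bijection.
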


\begin{example}
Consider a Poisson triple \ec{\big([,]_{E^{\ast}},\D,\mathscr{K}\big)} in the case when the fiberwise Lie algebra on \ec{E^{\ast}} is abelian and the contravariant derivative is flat, \,\ec{[,]_{E^{\ast}} \equiv 0}\, and \,\ec{\mathscr{K} = 0}.\, Then, $\D$ is related with the notion of a Poisson module (see \cite{Bursztyn}) and defines the Lie bracket of the form
    \begin{equation*}
        \{f_{1} \oplus \eta_{1},f_{2} \oplus \eta_{2}\}^{\mathrm{aff}} \,=\, \{f_{1},f_{2}\}_{S} \oplus \left(\D_{\dd{f_{1}}} \eta_{2} - \D_{\dd{f_{2}}}\eta_{1}\right).
    \end{equation*}
\end{example}

\begin{remark}
The notion of Poisson triples can be generalize to the more general situation, starting with a module over an abstract Poisson algebra.
One can extend Corollary \ref{corIPAPT} to this case by using the correspondence between Poisson algebras and Lie algebroids \cite{Marcut12, DVY-12, DVYu-15}.
\end{remark}

    \section{Existence of Infinitesimal Poisson Algebra}\label{existence of infinitesimal Poisson algebra}

In this section, we prove the existence of an infinitesimal Poisson algebra structure on the commutative algebra \ec{\Cinf{\mathrm{aff}}(E)} of fiberwise affine functions on the normal bundle $E$ of an embedded Poisson submanifold \ec{(S,\Pi_{S})} in a Poisson manifold \ec{(M,\Pi)}. According to Lemma \ref{LemaTripleToAlg}, it suffices to show that there exists a Poisson triple of $S$.

Pick a splitting
    \begin{equation}\label{ND1}
        \T_{S}M \,=\, \T{S} \oplus \LL,
    \end{equation}
where \,\ec{\LL \subset \T_{S}M}\, is a subbundle complementary to $\T{S}$, called a \emph{transversal} of $S$. Consider also the dual decomposition
    \begin{equation}\label{ND2}
        \T_{S}^{\ast}M \,=\, \LL^{\circ} \oplus {\T{S}}^{\circ},
    \end{equation}
and the quotient projection \,\ec{\nu:\T_{S}M \rightarrow E}\, (\ref{EcProyCoc}). Then, the image of the adjoint morphism \,\ec{\nu^{\ast}:E^{\ast} \rightarrow \T_{S}^{\ast}M}\, is \,\ec{\nu^{\ast}(E^{\ast}) = {\T{S}}^{\circ} \varhookrightarrow \T_{S}^{\ast}M}\, and hence \ec{\nu^{\ast}} gives a vector bundle isomorphism between \ec{E^{\ast}} and \ec{{\T{S}}^{\circ}}. Moreover, decomposition (\ref{ND2}) induces the vector bundle isomorphism \,\ec{\tau_{\LL}: \T^{\ast}S \rightarrow \LL^{\circ}}.

Denote by \,\ec{\varrho_{\LL}:\T_{S}^{\ast}M \rightarrow {\T{S}}^{\circ}}\, the projection along \ec{\LL^{\circ}} according to the decomposition (\ref{ND2}).

\begin{lemma}\label{LemaTransversalTriple}
Every transversal $\LL$ of $S$ induces a Poisson triple
    \begin{equation}\label{TR1}
        \big(\,[,]_{E^{\ast}},\D = \D^{\LL},\mathscr{K} = \mathscr{K}^{\LL}\big),
    \end{equation}
where the contravariant derivative $\D$ and tensor filed $\mathscr{K}$ are given by
    \begin{equation}\label{SU1}
        \nu^{\ast}(\D_{\alpha}\eta) \,:=\, [\tau_{\LL}(\alpha),\nu^{\ast}(\eta)]_{A_{S}},
    \end{equation}
and
    \begin{equation}\label{SU2}
        \nu^{\ast}\big(\mathscr{K}(\alpha,\beta)\big) \,:=\, \varrho_{_{\LL}}\big([\tau_{\LL}(\alpha),\tau_{\LL}(\beta)]_{A_{S}}\big),
    \end{equation}
for all \,\ec{\alpha,\beta \in \Gamma\T^{\ast}S}\, and \,\ec{\eta \in \Gamma{E^{\ast}}}.
\end{lemma}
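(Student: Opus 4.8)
The plan is to transport the three structure maps to the ambient cotangent Lie algebroid $A_{S}=(\T_{S}^{\ast}M,[,]_{A_S},\Pi^{\natural}|_{S})$ and then read off (\ref{Ja1})--(\ref{Ja2}) directly from the Jacobi identity of $[,]_{A_S}$. Throughout I identify $E^{\ast}$ with the ideal ${\T{S}}^{\circ}\subset A_{S}$ via $\nu^{\ast}$ (which realizes the same isomorphism $E^{\ast}\cong{\T{S}}^{\circ}$ used in (\ref{FLB})), and $\T^{\ast}S$ with the complementary subbundle $\LL^{\circ}$ via $\tau_{\LL}$, so that $\alpha\in\Gamma\T^{\ast}S$ lifts to $\tau_{\LL}(\alpha)\in\Gamma\LL^{\circ}$ and $\eta\in\Gamma E^{\ast}$ lifts to $\nu^{\ast}(\eta)\in\Gamma{\T{S}}^{\circ}$. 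Under these identifications the definitions (\ref{SU1}), (\ref{SU2}) and (\ref{FLB}) read $\nu^{\ast}(\D_{\alpha}\eta)=[\tau_{\LL}\alpha,\nu^{\ast}\eta]_{A_S}$, $\nu^{\ast}(\mathscr{K}(\alpha,\beta))=\varrho_{\LL}([\tau_{\LL}\alpha,\tau_{\LL}\beta]_{A_S})$ and $\nu^{\ast}[\eta_{1},\eta_{2}]_{E^{\ast}}=[\nu^{\ast}\eta_{1},\nu^{\ast}\eta_{2}]_{A_S}$. Two facts drive everything: first, that ${\T{S}}^{\circ}$ is an \emph{ideal} of $A_{S}$ on which the anchor vanishes (by (\ref{Po2})), and second, that the quotient $p:A_{S}\to\T^{\ast}S$ is a Lie algebroid morphism for which $\tau_{\LL}$ is a right splitting, $p\circ\tau_{\LL}=\mathrm{id}$, with compatible anchors $\Pi^{\natural}|_{S}(\tau_{\LL}\alpha)=\Pi_{S}^{\natural}(\alpha)$.

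The first step is the \emph{structural identity} obtained by decomposing $[\tau_{\LL}\alpha,\tau_{\LL}\beta]_{A_S}$ along $\T_{S}^{\ast}M=\LL^{\circ}\oplus{\T{S}}^{\circ}$ (\ref{ND2}). Since $p$ is a morphism, $p[\tau_{\LL}\alpha,\tau_{\LL}\beta]_{A_S}=[\alpha,\beta]_{\T^{\ast}S}$, so the $\LL^{\circ}$-component is $\tau_{\LL}([\alpha,\beta]_{\T^{\ast}S})$ while the ${\T{S}}^{\circ}$-component is $\varrho_{\LL}([\tau_{\LL}\alpha,\tau_{\LL}\beta]_{A_S})=\nu^{\ast}\mathscr{K}(\alpha,\beta)$; that is,
\[
    [\tau_{\LL}\alpha,\tau_{\LL}\beta]_{A_S}=\tau_{\LL}([\alpha,\beta]_{\T^{\ast}S})+\nu^{\ast}\mathscr{K}(\alpha,\beta).
\]
Before verifying (\ref{Ja1})--(\ref{Ja2}) I would record that $\D$ and $\mathscr{K}$ are objects of the required type: the Leibniz rule for $\D$ follows from that of $[,]_{A_S}$ together with $\Pi^{\natural}|_{S}(\tau_{\LL}\alpha)=\Pi_{S}^{\natural}(\alpha)$, while the $\Cinf{S}$-linearity of $\D$ in $\alpha$ and the $\Cinf{S}$-bilinearity and antisymmetry of $\mathscr{K}$ hold because the extra Leibniz terms are either proportional to $\tau_{\LL}\alpha\in\LL^{\circ}=\ker\varrho_{\LL}$ or are annihilated by the vanishing of the anchor on ${\T{S}}^{\circ}$. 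The same ideal property ensures that $\D_{\alpha}\eta$ and $\mathscr{K}(\alpha,\beta)$ indeed take values in ${\T{S}}^{\circ}\cong E^{\ast}$.

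With this dictionary, (\ref{Ja1}) and (\ref{Ja3}) reduce to the Jacobi identity of $[,]_{A_S}$. For (\ref{Ja1}), lifting $\D_{\alpha}[\eta,\xi]_{E^{\ast}}=[\D_{\alpha}\eta,\xi]_{E^{\ast}}+[\eta,\D_{\alpha}\xi]_{E^{\ast}}$ gives exactly $[\tau_{\LL}\alpha,[\nu^{\ast}\eta,\nu^{\ast}\xi]]=[[\tau_{\LL}\alpha,\nu^{\ast}\eta],\nu^{\ast}\xi]+[\nu^{\ast}\eta,[\tau_{\LL}\alpha,\nu^{\ast}\xi]]$. For (\ref{Ja3}), lifting $\mathrm{Curv}^{\D}(\alpha,\beta)\xi$ and applying Jacobi yields $[[\tau_{\LL}\alpha,\tau_{\LL}\beta]_{A_S},\nu^{\ast}\xi]-[\tau_{\LL}([\alpha,\beta]_{\T^{\ast}S}),\nu^{\ast}\xi]$, which by the structural identity collapses to $[\nu^{\ast}\mathscr{K}(\alpha,\beta),\nu^{\ast}\xi]=\nu^{\ast}(\mathrm{ad}_{\mathscr{K}(\alpha,\beta)}\xi)$, as desired.

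Finally, (\ref{Ja2}) is the Bianchi-type identity, and its bookkeeping is where I expect the main difficulty to lie. Here I would decompose the full Jacobi identity $\mathfrak{S}_{(\alpha,\beta,\gamma)}[\tau_{\LL}\alpha,[\tau_{\LL}\beta,\tau_{\LL}\gamma]_{A_S}]_{A_S}=0$ along $\LL^{\circ}\oplus{\T{S}}^{\circ}$, using the structural identity twice. The $\LL^{\circ}$-part equals $\tau_{\LL}\big(\mathfrak{S}_{(\alpha,\beta,\gamma)}[\alpha,[\beta,\gamma]_{\T^{\ast}S}]_{\T^{\ast}S}\big)$, which vanishes by the Jacobi identity of the cotangent algebroid $\T^{\ast}S$; the ${\T{S}}^{\circ}$-part is precisely $\mathfrak{S}_{(\alpha,\beta,\gamma)}\nu^{\ast}\big(\D_{\alpha}\mathscr{K}(\beta,\gamma)+\mathscr{K}(\alpha,[\beta,\gamma]_{\T^{\ast}S})\big)$, the key point being that $[\tau_{\LL}\alpha,\nu^{\ast}\mathscr{K}(\beta,\gamma)]_{A_S}=\nu^{\ast}\D_{\alpha}\mathscr{K}(\beta,\gamma)$ lies entirely in ${\T{S}}^{\circ}$ because the latter is an ideal. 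Since the whole sum vanishes and its $\LL^{\circ}$-part vanishes separately, the ${\T{S}}^{\circ}$-part must vanish as well, and applying $(\nu^{\ast})^{-1}$ yields (\ref{Ja2}). The only real care needed is to keep track of which summand of each iterated bracket lands in $\LL^{\circ}$ and which in ${\T{S}}^{\circ}$; it is precisely the ideal property of ${\T{S}}^{\circ}$ that forces the $\D\mathscr{K}$ contributions to sit in the ${\T{S}}^{\circ}$-component, where the Bianchi identity (\ref{Ja2}) lives.
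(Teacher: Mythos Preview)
Your proposal is correct and follows essentially the same approach as the paper: both transport the triple through the identification $\tau_{\LL}\oplus\nu^{\ast}:\T^{\ast}S\oplus E^{\ast}\to\LL^{\circ}\oplus{\T{S}}^{\circ}=\T_{S}^{\ast}M$ and then read off conditions (\ref{Ja1})--(\ref{Ja2}) as the components of the Jacobi identity for $[,]_{A_{S}}$, using crucially that ${\T{S}}^{\circ}$ is an ideal with trivial anchor. The paper's proof is terse---it simply asserts that ``the factorization of the Jacobi identity for the bracket $[,]_{A_{S}}$ just leads to the relations like (\ref{Ja1})--(\ref{Ja2})''---whereas you have written out this factorization explicitly, including the structural identity and the separate verification of each axiom; your account is a faithful expansion of what the paper leaves implicit.
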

\begin{proof}
Taking into account that \,\ec{{\T{S}}^{\circ} \subset \T_{S}^{\ast}M}\, is an ideal relative to the Lie bracket \ec{[,]_{A_{S}}}, we get that under the $\LL$-dependent identification
    \begin{equation}\label{ID1}
        \tau_{\LL} \oplus \nu^{\ast}: \T^{\ast}S \oplus E^{\ast} \,\longrightarrow\, \LL^{\circ} \oplus {\T{S}}^{\circ} = \T_{S}^{\ast}M,
    \end{equation}
the triple (\ref{TR1}) transforms to the following one
    \begin{equation}\label{Tr2}
        \big([,]_{{\T{S}}^{\circ}}, \D', \mathscr{K}'\big),
    \end{equation}
where \,\ec{\D':\Gamma\LL^{\circ} \times \Gamma{\T{S}}^{\circ} \rightarrow \Gamma{\T{S}}^{\circ}}\, is a contravariant derivative on the vector bundle \ec{{\T{S}}^{\circ}} given by \,\ec{\D_{\alpha}'\zeta = [\alpha',\zeta]_{A_{S}}},\, for all \,\ec{\alpha' =\tau^{-1}_{\LL}(\alpha) \in \T^{\ast}S, \alpha\in \LL^{\circ}}\, and \,\ec{\zeta \in {\T{S}}^{\circ}}.\, Moreover, the fiberwise Lie bracket \ec{[,]_{{{\T{S}}^{\circ}}}} and the tensor field $\mathscr{K}'$ take the form
    \begin{equation*}
        [\zeta_{1},\zeta_{2}]_{{\T{S}}^{\circ}} \,=\, [\zeta_{1},\zeta_{2}]_{A_{S}}, \qquad \mathscr{K}'(\alpha',\beta') \,=\, \varrho_{_{\LL}}\big([\alpha',\beta']_{A_{S}} \big).
    \end{equation*}
By using identification (\ref{ID1}), one can show that the factorization of the Jacobi identity for the bracket \ec{[,]_{A_{S}}} just leads to the relations like (\ref{Ja1})-(\ref{Ja2}) for triple (\ref{Tr2}). So, this implies that the original triple (\ref{TR1}) is Poisson.
\end{proof}

Combining the above results, we arrive at the following result on the parametrization of infinitesimal Poisson algebras.

\begin{proposition}
Every transversal $\LL$ in (\ref{ND1}) induces an infinitesimal Poisson algebra \,\ec{P_{1}^{\LL} = (\Cinf{\mathrm{aff}}(E),\cdot, \newline \{,\}^{\LL} )},\, where the Lie bracket \ec{\{,\}^{\LL}} is defined by formula (\ref{PLB1}) involving the Poisson triple \ec{([,]_{E^{\ast}},\D^{\LL},\mathscr{K}^{\LL})} (\ref{TR1}). Moreover, the algebra \ec{P_{1}^{\LL}} is independent of $\LL$ up to isomorphism.
\end{proposition}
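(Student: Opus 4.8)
The first assertion is immediate from the two preceding lemmas: by Lemma~\ref{LemaTransversalTriple} the transversal $\LL$ determines the Poisson triple (\ref{TR1}), and by Lemma~\ref{LemaTripleToAlg} that triple induces an infinitesimal Poisson algebra via the bracket (\ref{PLB1}); feeding in $([,]_{E^{\ast}},\D^{\LL},\mathscr{K}^{\LL})$ produces $P_{1}^{\LL}$. So the real content is the final clause, and the plan is to fix two transversals $\LL,\LL'$ of $S$ and construct an explicit Poisson algebra isomorphism $\Psi\colon P_{1}^{\LL} \to P_{1}^{\LL'}$.

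First I would measure the difference of the two transversals. Both $\tau_{\LL}$ and $\tau_{\LL'}$ are right inverses of the restriction $\T_{S}^{\ast}M \to \T^{\ast}S$, hence two vector-bundle splittings of the exact sequence $0 \to {\T{S}}^{\circ} \to \T_{S}^{\ast}M \to \T^{\ast}S \to 0$. Their difference therefore takes values in ${\T{S}}^{\circ} = \nu^{\ast}(E^{\ast})$, so there is a unique $\Cinf{S}$-linear bundle map $b\colon \T^{\ast}S \to E^{\ast}$ with $\tau_{\LL}-\tau_{\LL'} = \nu^{\ast}\circ b$. I would then define
\[
    \Psi\colon \Cinf{\mathrm{aff}}(E) \longrightarrow \Cinf{\mathrm{aff}}(E), \qquad \Psi(f \oplus \eta) \,=\, f \oplus \big(\eta + b(\dd{f})\big).
\]
That $\Psi$ is an automorphism of the commutative algebra $(\Cinf{\mathrm{aff}}(E),\cdot)$ follows from the Leibniz rule $\dd(f_{1}f_{2}) = f_{1}\dd{f_{2}} + f_{2}\dd{f_{1}}$ and the $\Cinf{S}$-linearity of $b$, with inverse induced by $-b$ (equivalently by the pair $(\LL',\LL)$). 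Since $\Psi$ preserves the projection onto $\Cinf{S}$ and restricts to the identity on $\Cinf{\mathrm{lin}}(E)\simeq\Gamma{E^{\ast}}$, it is automatically compatible with conditions (a)--(b) of an IPA.

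The main step is to verify $\Psi(\{\phi_{1},\phi_{2}\}^{\LL}) = \{\Psi\phi_{1},\Psi\phi_{2}\}^{\LL'}$. Conceptually this is clear: both triples are push-forwards of the single bracket $[,]_{A_{S}}$ under the identifications (\ref{ID1}) for $\LL$ and $\LL'$, which differ exactly by the unipotent automorphism $\alpha\oplus\eta \mapsto \alpha\oplus(\eta+b\alpha)$ of $\T^{\ast}S\oplus E^{\ast}$, and $\Psi$ is the map this induces on affine functions. Concretely, substituting $\tau_{\LL'} = \tau_{\LL}-\nu^{\ast}b$ into (\ref{SU1})--(\ref{SU2}) and using that ${\T{S}}^{\circ}$ is an ideal of $A_{S}$ (so that the relevant brackets land in ${\T{S}}^{\circ}$, and that the $\LL^{\circ}$-component of $[\tau_{\LL}\alpha,\tau_{\LL}\beta]_{A_{S}}$ projects to $[\alpha,\beta]_{\T^{\ast}S}$) yields the gauge-transformation formulas
\[
    \D^{\LL'}_{\alpha}\eta \,=\, \D^{\LL}_{\alpha}\eta \,-\, \mathrm{ad}_{b(\alpha)}\eta,
\]
\[
    \mathscr{K}^{\LL'}(\alpha,\beta) \,=\, \mathscr{K}^{\LL}(\alpha,\beta) + b\big([\alpha,\beta]_{\T^{\ast}S}\big) - \D^{\LL}_{\alpha}(b\beta) + \D^{\LL}_{\beta}(b\alpha) + [b\alpha,b\beta]_{E^{\ast}}.
\]
I would then insert these together with $\eta_{i}\mapsto\eta_{i}+b(\dd{f_{i}})$ into the bracket (\ref{PLB1}) written for $\LL'$, and use the cotangent-algebroid identity $\dd\{f_{1},f_{2}\}_{S} = [\dd{f_{1}},\dd{f_{2}}]_{\T^{\ast}S}$ to recognize the output as $\Psi$ applied to (\ref{PLB1}) for $\LL$. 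The hard part is purely the bookkeeping in this cancellation: the terms $\D^{\LL}(b\alpha)$ coming from the derivative part must cancel against those produced by $\mathscr{K}^{\LL'}$, the mixed terms $[b\alpha,\eta]_{E^{\ast}}$ must cancel in pairs by antisymmetry of $[,]_{E^{\ast}}$, and the quadratic terms $[b\alpha,b\beta]_{E^{\ast}}$ must cancel among themselves. Once this is checked, $\Psi$ is a Poisson algebra isomorphism $P_{1}^{\LL}\cong P_{1}^{\LL'}$; equivalently, it expresses the naturality of the correspondence of Corollary~\ref{corIPAPT} under the gauge automorphism induced by $b$, which completes the proof.
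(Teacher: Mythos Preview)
Your proposal is correct and follows essentially the same approach as the paper: both measure the difference of two transversals by a bundle map $\T^{\ast}S\to E^{\ast}$ (your $b$, the paper's $\mu$ defined via a morphism $\delta:\LL\to\T S$ and (\ref{RU3})), derive the corresponding gauge-transformation formulas for $\D$ and $\mathscr{K}$ (your displayed identities are exactly (\ref{RU1})--(\ref{RU2}) up to the sign convention $b=-\mu$), and then verify that the shift $f\oplus\eta\mapsto f\oplus(\eta+b(\dd f))$ is a Poisson algebra isomorphism. Your route to $b$ via the difference of dual splittings $\tau_{\LL}-\tau_{\LL'}$ and your conceptual remark that $\Psi$ is induced by the unipotent automorphism intertwining the two identifications (\ref{ID1}) are slightly more streamlined than the paper's primal-side parametrization, but the argument is the same.
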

\begin{proof}
The first assertion follows from Lemma \ref{LemaTripleToAlg} and Lemma \ref{LemaTransversalTriple}. Next, fixing a transversal $\LL$ of $S$, we observe that any another transversal $\widetilde{\LL}$, \,\ec{\T_{S}M = \T{S} \oplus \widetilde{\LL}}\, is represented as follows
    \begin{equation}\label{F1}
        \widetilde{\LL} \,=\, \{w+\delta(w) \,|\, w \in \LL\},
    \end{equation}
where \,\ec{\delta:\LL \rightarrow \T{S}}\, is a vector bundle morphism. On the contrary, for a given $\LL$, an arbitrary vector bundle morphism $\delta$ from $\LL$ to $\T{S}$ induces a transversal $\widetilde{\LL}$ by formula (\ref{F1}). Therefore, we have the following transition rule for the contravariant derivatives \,\ec{\D = \D^{\LL}}\, and \,\ec{\widetilde{\D} = \D^{\widetilde{\LL}}}\, associated with two transversals $\LL$ and $\widetilde{\LL}$ of $S$:
    \begin{equation}\label{RU1}
        \widetilde{\D}_{\alpha} = \D_{\alpha} + \mathrm{ad}_{\mu(\alpha)}.
    \end{equation}
Here \,\ec{\mu:\T^{\ast}S \rightarrow E^{\ast}}\, is a vector bundle morphism of the form
    \begin{equation}\label{RU3}
        \mu \,=\, -\big(\nu|_{\LL}\big)^{\ast - 1} \circ \delta^{\ast}.
    \end{equation}
Moreover, for tensor fields \,\ec{\mathscr{K} = \mathscr{K}^{\LL}}\, and \,\ec{\widetilde{\mathscr{K}} = \mathscr{K}^{\widetilde{\LL}}},\, we also have
    \begin{equation}\label{RU2}
        \widetilde{\mathscr{K}}(\alpha,\beta) \,=\, \mathscr{K}(\alpha,\beta) \,+\, \D_{\alpha}\mu(\beta) \,-\, \D_{\beta}\mu(\alpha) \mu\big([\alpha,\beta]_{\T^{\ast}S}\big) \,+\, [\mu(\alpha),\mu(\beta)]_{E^{\ast}}.
    \end{equation}
Finally, by using transition rules (\ref{RU1}), (\ref{RU2}) and by direct computations, we verify that the transformation \,\ec{f \oplus \eta \mapsto f \oplus (\eta + \mu(\dd{f}))}\, gives an isomorphism between Poisson algebras \ec{P_{1}^{\LL}} and \ec{P_{1}^{\widetilde{\LL}}}.
\end{proof}

To complete the proof of Theorem \ref{thmIPA}, we observe that for a given exponential map \,\ec{\mathbf{e}:E \rightarrow M},\, the algebra \ec{P_{1}^{\LL}} gives a first order approximation to the original one \,\ec{P=\Cinf{M}},\, in the sense of (\ref{AP1}), under the following choice of $\LL$:
    \begin{equation}\label{CC}
        \LL_{q} = \big(\dd_{q}\mathbf{e}\big)\big(E_{q}\big), \quad \forall\,q \in S.
    \end{equation}

\begin{remark}\label{remark:cociente}
As was observed in \cite{Marcut12}, the infinitesimal data of $S$ intrinsically induce the Poisson algebra \,\ec{\Cinf{M}/I^{2}(S)}.\, One can show that \,\ec{P^{\mathscr{L}}_{1}}\, is isomorphic to this Poisson algebra.
\end{remark}

    \section{The Linearization Procedure along Submanifolds}\label{the linearization procedure along submanifolds}

Here, we describe a general linearization procedure for vector fields at invariant submanifolds (see, also \cite{MaRR-91}).

Let $M$ be a $\Cinf{}$ manifold $M$ and \,\ec{S \subset M}\, be an embedded submanifold. Suppose that we are given a vector field $X$ on $M$ which is \emph{tangent} to $S$, \,\ec{X_{q} \in \T_{q}S}, for all \,\ec{q \in M};\, and hence its flow \ec{\mathrm{Fl}_{X}^{t}} leaves $S$ \emph{invariant}. The Lie algebra of such vector fields is denoted by $\X{S}(M)$.

Consider the normal bundle \,\ec{E=\T_{S}M/\T{S}}\, of $S$ with canonical projection \,\ec{\pi:E \rightarrow S}.\, Denote by \ec{\X{\mathrm{lin}}(E)} the Lie algebra of \emph{linear vector fields} on $E$. Each element $V$ of \ec{\X{\mathrm{lin}}(E)} is characterized by the properties: $V$ descends under $\pi$ to a vector field $v$ on $S$, and the Lie derivative $\dlie{V}$ leaves invariant the subspace \ec{\Cinf{\mathrm{lin}}(E)}.

Then, for every linear vector field \,\ec{V \in \X{\mathrm{lin}}(E)},\, the Lie derivative \,\ec{\dlie{V}:\Cinf{\mathrm{aff}}(E) \rightarrow \Cinf{\mathrm{aff}}(E)}\, induces a \emph{derivation} of the commutative algebra \ec{\Cinf{\mathrm{aff}}(E)} with multiplication (\ref{LB2}). It is clear that $\dlie{V}$ leaves invariant the components \ec{\pi^{\ast}\Cinf{S}} and \ec{\Cinf{\mathrm{lin}}(E)} in decomposition (\ref{AF1}).

Denote by \,\ec{\rho_{\varepsilon}:E \rightarrow E}\, the dilation, that is, the fiberwise multiplication on $E$ by a factor \,\ec{\varepsilon > 0}.\, Fix an exponential map \,\ec{\mathbf{e}:E \rightarrow M}\, from the total space onto a
neighborhood of $S$ in $M$. Since \,\ec{\mathbf{e}|_{S} = \mathrm{id}_{S}},\, the pullback vector field \ec{\mathbf{e}^{\ast}X} is tangent to the zero section \,\ec{S \subset E}\, and its restriction to $S$ is just the restriction \,\ec{v := X|_{S}}\, of $X$ to $S$.

Denote \,\ec{\mathbf{e}_{\varepsilon} := \mathbf{e} \circ \rho_{\varepsilon}}.\, Then, one can show that the following limit
    \begin{equation*}\label{DL1}
        \mathrm{var}_{S}X \,:=\, \lim_{\varepsilon \rightarrow 0}\mathbf{e}_{\varepsilon}^{\ast}X \,\in\, \X{\mathrm{lin}}(E)
    \end{equation*}
exists and gives a \emph{linear vector field} on $E$ which descends to the restriction \,\ec{v = X|_{S}}, \,\ec{\dd\pi \circ \mathrm{var}_{S}X = v \circ \pi},\, and is independent of the choice of an exponential map \ec{\mathbf{e}}. It is clear that the zero section \,\ec{S \hookrightarrow E}\, is an invariant submanifold of the vector field \ec{\mathrm{var}_{S}X} whose restriction to $S$ is just $v$.

The linear dynamical system \ec{(E,\mathrm{var}_{S}X,S)} on the normal bundle $E$ is called the \emph{first variation system} of the vector field $X$ over an invariant submanifold \,\ec{S \subset M}.

Observe that the linear vector field \ec{\mathrm{var}_{S}X} gives a $0$th-\emph{order approximation} to $X$ around the submanifold $S$, in the sense that \,\ec{\mathbf{e}_{\varepsilon}^{\ast}X = \mathrm{var}_{S}X + \OO(\varepsilon)}\, as \,\ec{\varepsilon \rightarrow 0}.

Indeed, fix a transversal \,\ec{\LL \subset \T_{S}M}\, of $S$ in (\ref{ND1}) and consider the canonical decomposition (\ref{CS1}). Pick an exponential map \,\ec{\mathbf{e}:E \rightarrow M}\, satisfying the compatibility condition (\ref{CC}). Then, we have the expansion
    \begin{equation}\label{FVS3}
        \mathbf{e}_{\varepsilon}^{\ast}X \,=\, \mathrm{var}_{S}(X) \,+\, \varepsilon\,\mathscr{T} \,+\, \mathscr{O}(\varepsilon^{2}),
    \end{equation}
where the vector field $\mathscr{T}$ on $E$ is uniquely determined by the choice of a transversal $\LL$ in (\ref{ND1}) modulo vertical vector fields on $E$, that is, by elements of \,\ec{\X{V}(E) = \Gamma\,\mathrm{Ver}(E)}.\, Here, \,\ec{\mathrm{Ver}(E) = \ker{\dd\pi}}\, is the vertical subbundle of $E$. The image of the vector field $\mathscr{T}$ in (\ref{FVS3}) under the natural projection \,\ec{\X{E} \rightarrow \X{E}/\X{V}(E)}\, is called the \emph{dynamical torsion} of the vector $X$ relative to a transversal $\LL$ to the invariant submanifold $S$ and denoted by \ec{\mathrm{tor}_{S}(X,\LL)}.

Therefore, the first variation system \ec{(E,\mathrm{var}_{S}X,S)} gives a natural \emph{linearized model} for the original dynamical system \ec{(M,X,S)}.

It is also useful to give a coordinate representation for the linearized model. Let \,\ec{(x,y)=(x^{i},y^{a})}\, be a coordinate system on $E$, where \ec{(x^{i})} are coordinates on $S$ and \ec{(y^{a})} are coordinates along the fibers with respect to a basis \ec{(e_{a})} of local sections of $E$. Then,
    \begin{equation}\label{FVS8}
        v \,=\, v^{i}(x)\frac{\partial}{\partial x^{i}}, \qquad \mathbf{e}^{\ast}X \,=\, X^{i}(x,y)\frac{\partial}{\partial x^{i}} \,+\, X^{a}(x,y)\frac{\partial}{\partial y^{a}},
    \end{equation}
with \,\ec{X^{i}(x,0) = v^{i}(x)}, \,\ec{X^{a}(x,0)=0}.\, So, we have
    \begin{equation*}
        \mathrm{var}_{S}X \,=\, v^{i}(x)\frac{\partial}{\partial x^{i}} \,+\, \frac{\partial X^{a}}{\partial y^{b}}\bigg|_{(x,0)}y^{b}\frac{\partial}{\partial y^{a}},
    \end{equation*}
and
    \begin{equation*}
        \mathscr{T} \,=\, \frac{\partial X^{i}}{\partial y^{a}}\bigg|_{(x,0)}y^{a}\frac{\partial}{\partial x^{i}} \,+\, \frac{1}{2}\frac{\partial^{2}X^{a}}{\partial y^{b}\partial y^{c}}\bigg|_{(x,0)}y^{b}y^{c}\frac{\partial}{\partial y^{a}}.
    \end{equation*}
Therefore, locally, the dynamical torsion is represented as
    \begin{equation}\label{TOR}
        \mathrm{tor}_{S}(X,\LL) \,=\, \frac{\partial X^{i}}{\partial y^{a}}\bigg|_{(x,0)}y^{a}\frac{\partial}{\partial x^{i}}.
    \end{equation}

Recall that a \emph{transversal} $\LL$ of $S$ is said to be $X$-\emph{invariant}, if the subbundle \,\ec{\LL \subset \T_{S}M}\, is invariant under the differential of the flow $X$ (condition (\ref{I1})).

The vanishing of the dynamical torsion has the following meaning.

\begin{lemma}\label{LemmaTransversal}
A transversal $\LL$ of $S$ is $X$-invariant if and only if
    \begin{equation}\label{FVS12}
        \mathrm{tor}_{S}(X,\LL) \,=\, 0.
    \end{equation}
\end{lemma}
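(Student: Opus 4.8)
The plan is to prove the equivalence by working in the coordinate representation established just above the statement, where the dynamical torsion has the explicit local form \eqref{TOR}, and then to connect the vanishing of these coordinate coefficients with the infinitesimal invariance condition \eqref{I1}. The key observation is that condition \eqref{I1} is the integrated (flow-level) version of invariance, while \eqref{FVS12} is an infinitesimal (derivative-level) condition; so I expect the heart of the argument to be a standard passage between a flow and its generating vector field.

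First I would fix a transversal $\LL$ satisfying the splitting \eqref{ND1} and choose an exponential map $\mathbf{e}$ compatible with $\LL$ via \eqref{CC}, so that the coordinate expansion \eqref{FVS8} is available and the torsion is given by \eqref{TOR}. In these coordinates the transversal $\LL$ at a point $q=(x,0)$ is precisely the span of the coordinate vectors $\partial/\partial y^{a}$ (this is exactly what the compatibility condition \eqref{CC} together with the canonical splitting \eqref{CS1} encodes). Thus the vanishing $\mathrm{tor}_{S}(X,\LL)=0$ is equivalent to $\partial X^{i}/\partial y^{a}|_{(x,0)}=0$ for all $i,a$, which says that the horizontal ($\partial/\partial x^{i}$) component of $\mathbf{e}^{\ast}X$ has vanishing first-order variation in the fiber directions along $S$.

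Next I would translate $X$-invariance of $\LL$ into the same infinitesimal statement. The condition \eqref{I1} says the flow of $X$ carries the subbundle $\LL$ into itself along $S$; differentiating this in $t$ at $t=0$ produces the infinitesimal invariance condition that the linearized flow of $\mathbf{e}^{\ast}X$ preserves the vertical distribution $\mathrm{Ver}(E)$ (which equals $\LL$ under $\mathbf{e}$). Concretely, the variational equation governs how a vector in $\LL_{q}$ evolves under $\dd\,\mathrm{Fl}_{X}^{t}$, and its linear part is controlled precisely by the Jacobian block $\partial X^{i}/\partial y^{a}|_{(x,0)}$: this block measures the infinitesimal tilting of a fiber (vertical) vector toward the tangent directions $\partial/\partial x^{i}$ of $S$. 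Invariance of $\LL$ under the flow forces this tilting to vanish, giving $\partial X^{i}/\partial y^{a}|_{(x,0)}=0$, and conversely the vanishing of this block means the linearized flow keeps $\LL$ invariant, which integrates back to \eqref{I1}. I would make this precise by noting that $\mathrm{var}_{S}X$ is the generator of the linearized flow and that its block-triangular coordinate form (no $\partial/\partial x^{i}$ term depending on $y$) is exactly the statement that $\mathrm{Ver}(E)$ is $\mathrm{var}_{S}X$-invariant.

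The main obstacle, and the step requiring the most care, will be the rigorous identification of the flow-level invariance \eqref{I1} with the infinitesimal coordinate condition, rather than a naive ``differentiate once'' argument: one must verify that the full flow preserves $\LL$ if and only if the single Jacobian block vanishes identically along $S$, which uses that $S$ itself is invariant (so $X^{a}(x,0)=0$ and $X^{i}(x,0)=v^{i}(x)$) and that the linearized evolution of the $\LL$-component decouples at zeroth order in the fiber variable. I would handle this by writing out the variational equation for the components of a section of $\LL$ transported by $\dd\,\mathrm{Fl}_{X}^{t}$ and observing that the $\partial/\partial x^{i}$-components satisfy a linear homogeneous ODE driven exactly by $\partial X^{i}/\partial y^{a}|_{(x,0)}$; invariance of $\LL$ is then equivalent to this driving term being zero, which is \eqref{FVS12}.
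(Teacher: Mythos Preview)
Your proposal is correct and reaches the same coordinate condition $\partial X^{i}/\partial y^{a}|_{(x,0)}=0$ that the paper identifies with the vanishing torsion. The route differs slightly: the paper expresses the infinitesimal invariance of $\LL$ via the Lie bracket criterion
\[
[\mathbf{e}^{\ast}X,Y]_{q}\,\in\,E_{q},\qquad q\in S,\ Y\in\X{V}(E),
\]
and then simply evaluates $[\mathbf{e}^{\ast}X,\partial/\partial y^{b}]$ at $y=0$ to read off $\partial X^{i}/\partial y^{b}|_{(x,0)}=0$. Your approach works directly with the variational equation for $\dd\,\mathrm{Fl}_{X}^{t}$ acting on a vector in $\LL_{q}$, arguing that the horizontal component $u^{i}(t)$ satisfies a linear ODE with source term $\partial X^{i}/\partial y^{a}|_{(x(t),0)}\,w^{a}(t)$, so that $u^{i}\equiv 0$ for all initial data iff this block vanishes. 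The two are equivalent (the Lie bracket is the generator of the linearized flow), but the paper's bracket formulation is shorter and avoids the ODE bookkeeping; your version has the advantage of making the passage between the integrated condition \eqref{I1} and its infinitesimal form completely explicit. One minor wording issue: the ODE for $u^{i}$ is linear \emph{inhomogeneous} (the block $\partial X^{i}/\partial y^{a}$ appears as the forcing term), not homogeneous as you wrote.
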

\begin{proof}
Fixing an exponential map $\mathbf{e}$ satisfying condition (\ref{CC}), let us consider the pull-back vector field \ec{\mathbf{e}^{\ast}X} on $E$. Then, the $X$-invariance of the transversal $\LL$ is equivalent to the invariance of the splitting \,\ec{\T_{S}E=\T{S} \oplus E}\, with respect to the flow of \ec{\mathbf{e}^{\ast}X}. In infinitesimal terms, the \ec{\mathbf{e}^{\ast}X}-invariance of the subbundle $E$ of \ec{\T_{S}E} is expressed as follows
    \begin{equation}\label{FVS13}
        [\mathbf{e}^{\ast}X,Y]_{q} \,\in\, E_{q} \subset \T_{q}E,
    \end{equation}
for any \,\ec{q \in S}\, and \,\ec{Y \in \X{V}(E)}.\, Taking \,\ec{Y = \frac{\partial}{\partial y^{b}}}\, and by using (\ref{FVS8}), we get
    \begin{equation*}
        \big[\mathbf{e}^{\ast}X,\tfrac{\partial}{\partial y^{b}}\big] \,=\, -\left(\frac{\partial X^{i}}{\partial y^{b}}{(x,y)}\frac{\partial}{\partial x^{i}} + \frac{\partial X^{a}}{\partial y^{b}}{(x,y)}\frac{\partial}{\partial
y^{a}}\right).
    \end{equation*}
It follows that, in local terms, condition (\ref{FVS13}) reads \,\ec{{\partial X^{i}}/{\partial y^{b}}\,|_{(x,0)} = 0},\, for \,\ec{b=1,\ldots, \dim S}.\, Comparing this with (\ref{TOR}), we prove (\ref{FVS12}).
\end{proof}

We conclude this section with the following observation on the symmetry properties of the linearized dynamics over $S$. It follows from
(\ref{FVS3}) that the correspondence
    \begin{equation}\label{FVS15}
        \X{S}(M) \ni X \,\longmapsto\, \mathrm{var}_{S}X \in \X{\mathrm{lin}}(E)
    \end{equation}
is a \emph{Lie algebra homomorphism}, \,\ec{\mathrm{var}_{S}[X_{1},X_{2}] = [\mathrm{var}_{S}X_{1},\mathrm{var}_{S}X_{2}]}.

In context of the symmetries of a given vector field $X$ and its first variation system, we have the following consequence: the image under the homomorphism (\ref{FVS15}) of the Lie algebra of vector fields on $M$ which are tangent to $S$ and commute with $X$ belongs to the Lie algebra of linear vector fields on $E$ commuting with \ec{\mathrm{var}_{S}X}.

Moreover, we have the following fact. For every \,\ec{H \in \Cinf{M}},\, denote by \,\ec{H_{\LL}^{\mathrm{aff}} \in \Cinf{\mathrm{aff}}(E)}\, its \emph{first-order approximation} around $S$, defined by means of homomorphism (\ref{AP2}),
    \begin{equation}\label{LH1}
        H_{\LL}^{\mathrm{aff}} \,:=\, \mathrm{Aff}(H \circ \mathbf{e}) \,=\, \pi^{\ast}h + \ell_{\eta^{\LL}} \,=\, F^{(0)} + F_{\LL}^{(1)}.
    \end{equation}
Here, \,\ec{h = H|_{S}},
    \begin{equation}\label{LH2}
        \eta^{\LL} =\, \chi^{-1} \circ \mathrm{pr}\big(\dd(H \circ \mathbf{e})|_{S}\big),
    \end{equation}
and an exponential map \,\ec{\mathbf{e}:E \rightarrow M}\, is compatible with a given transversal $\LL$ by condition (\ref{CC}).

\begin{lemma}
Let \,\ec{F \in \Cinf{M}}\, be a first integral of a vector field \,\ec{X \in \X{S}(M)}.\, Suppose that a transversal $\LL$ is $X$-invariant. Then, the fiberwise affine function \ec{F_{\LL}^{\mathrm{aff}}} is a first integral of the first variation system \ec{\mathrm{var}_{S}X},
    \begin{equation}\label{FI2}
        \dlie{\mathrm{var}_{S}X}F^{(0)} \,=\, 0 \qquad \text{and} \qquad \dlie{\mathrm{var}_{S}X}F_{\LL}^{(1)} \,=\, 0.
    \end{equation}
\end{lemma}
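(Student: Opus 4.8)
The plan is to exploit the hypothesis that $F$ is a first integral, $\dlie{X}F=0$, and to propagate this single identity through the $\varepsilon$-rescaling that defines the linearized model. Since $\mathbf{e}_{\varepsilon}=\mathbf{e}\circ\rho_{\varepsilon}$ is a diffeomorphism onto its image and pullback by a diffeomorphism intertwines Lie derivatives of functions along vector fields, I would first observe that
\[
\dlie{\mathbf{e}_{\varepsilon}^{\ast}X}\big(\mathbf{e}_{\varepsilon}^{\ast}F\big)\,=\,\mathbf{e}_{\varepsilon}^{\ast}\big(\dlie{X}F\big)\,=\,0,\qquad \forall\,\varepsilon>0.
\]
The whole proof then consists in expanding both factors in powers of $\varepsilon$ and reading off the coefficients of $\varepsilon^{0}$ and $\varepsilon^{1}$.

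For the vector field I would use the expansion (\ref{FVS3}), namely $\mathbf{e}_{\varepsilon}^{\ast}X=\mathrm{var}_{S}X+\varepsilon\,\mathscr{T}+\mathscr{O}(\varepsilon^{2})$. For the function I would write $\mathbf{e}_{\varepsilon}^{\ast}F=\rho_{\varepsilon}^{\ast}(F\circ\mathbf{e})$ and Taylor-expand $F\circ\mathbf{e}\in\Cinf{E}$ along the fibers about the zero section. Because $\mathbf{e}|_{S}=\mathrm{id}_{S}$, the fiberwise-constant term is $F^{(0)}=\pi^{\ast}f$ with $f=F|_{S}$, and the linear term is exactly $F_{\LL}^{(1)}=\ell_{\eta^{\LL}}$ with $\eta^{\LL}$ as in (\ref{LH2}); this is precisely the statement that $\rho_{\varepsilon}^{\ast}$ reproduces the linearization map $\mathrm{Aff}$ order by order under the compatibility (\ref{CC}) between $\mathbf{e}$ and $\LL$. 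Thus $\mathbf{e}_{\varepsilon}^{\ast}F=F^{(0)}+\varepsilon\,F_{\LL}^{(1)}+\mathscr{O}(\varepsilon^{2})$, matching (\ref{LH1}).

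Substituting both expansions into the displayed identity and collecting powers of $\varepsilon$ yields
\[
\dlie{\mathrm{var}_{S}X}F^{(0)}=0,\qquad \dlie{\mathrm{var}_{S}X}F_{\LL}^{(1)}+\dlie{\mathscr{T}}F^{(0)}=0.
\]
The first relation is the first assertion in (\ref{FI2}); it is also directly transparent, since $\mathrm{var}_{S}X$ projects to $v=X|_{S}$ and $\dlie{\mathrm{var}_{S}X}(\pi^{\ast}f)=\pi^{\ast}\big(v(f)\big)=\pi^{\ast}\big((XF)|_{S}\big)=0$. To obtain the second assertion it then suffices to show $\dlie{\mathscr{T}}F^{(0)}=0$. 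Here the $X$-invariance hypothesis enters: by Lemma~\ref{LemmaTransversal}, $X$-invariance of $\LL$ is equivalent to $\mathrm{tor}_{S}(X,\LL)=0$, and since the dynamical torsion is the class of $\mathscr{T}$ modulo $\X{V}(E)$, this means $\mathscr{T}\in\X{V}(E)$ is vertical. A vertical field annihilates the basic function $F^{(0)}=\pi^{\ast}f$ (as $\dd\pi\circ\mathscr{T}=0$), so $\dlie{\mathscr{T}}F^{(0)}=0$ and the second identity in (\ref{FI2}) follows.

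I expect the main obstacle to lie in the second paragraph: rigorously matching the first-order fiber-Taylor coefficient of $\mathbf{e}_{\varepsilon}^{\ast}F$ with the \emph{intrinsically} defined $F_{\LL}^{(1)}=\ell_{\eta^{\LL}}$ of (\ref{LH1})--(\ref{LH2}). This identification is exactly where the compatibility $\LL_{q}=(\dd_{q}\mathbf{e})(E_{q})$ from (\ref{CC}) is indispensable, since it guarantees that the normal-derivative data extracted by $\mathrm{pr}$ and $\chi^{-1}$ coincide with the fiber derivative of $F\circ\mathbf{e}$ at $S$; the remaining step—justifying that the $\mathscr{O}(\varepsilon^{2})$ remainders may be discarded when separating orders—is a routine consequence of the smoothness and local uniformity of the family $\mathbf{e}_{\varepsilon}^{\ast}X$ and $\mathbf{e}_{\varepsilon}^{\ast}F$ near the zero section.
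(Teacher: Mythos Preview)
Your proposal is correct and follows essentially the same approach as the paper: pull back the identity $\dlie{X}F=0$ via $\mathbf{e}_{\varepsilon}$, expand both $\mathbf{e}_{\varepsilon}^{\ast}X$ (using (\ref{FVS3})) and $\mathbf{e}_{\varepsilon}^{\ast}F$ to first order in $\varepsilon$, and then use Lemma~\ref{LemmaTransversal} to conclude that $\mathscr{T}$ is vertical so that $\dlie{\mathscr{T}}F^{(0)}=0$. Your additional care in justifying the match between the fiberwise Taylor coefficient and the intrinsic $F_{\LL}^{(1)}$ via the compatibility condition (\ref{CC}) is a point the paper leaves implicit.
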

\begin{proof}
The equality \,\ec{\dlie{X}F=0}\, implies that
    \begin{equation}\label{FVS19}
        \dlie{\mathbf{e}_{\varepsilon}^{\ast}X}\big(\mathbf{e}_{\varepsilon}^{\ast}F\big) \,=\, 0.
    \end{equation}
In particular, \,\ec{F^{(0)}=\pi^{\ast}(\iota_{S}^{\ast}F)}\, is a first integral of the restriction \,\ec{v=X|_{S}}.\, On the other hand, by decomposition (\ref{FVS3}) we get
    \begin{equation}\label{FVS20}
        \dlie{\mathbf{e}_{\varepsilon}^{\ast}X}(\mathbf{e}_{\varepsilon}^{\ast}F) \,=\, \pi^{\ast}\dlie{v}(\iota_{S}^{\ast}F) \,+\, \varepsilon\big(\dlie{\mathrm{var}_{S}X}F_{\LL}^{(1)} + \dlie{\mathscr{T}}F^{(0)}\big) \,+\, \OO(\varepsilon^{2}).
    \end{equation}
The $X$-invariance of the transversal $\LL$ is equivalent to condition (\ref{FVS12}). This means that the vector field $\mathscr{T}$ is vertical and hence \,\ec{\dlie{\mathscr{T}}(\pi^{\ast}f)=0},\, for any \,\ec{f \in \Cinf{S}}.\, Then, (\ref{FI2}) follows from (\ref{FVS19}), (\ref{FVS20}).
\end{proof}

    \section{The Hamiltonization Problem}\label{the hamiltonization problem}

As we mentioned above, the linearization of Hamiltonian dynamics at invariant submanifolds may destroy the Hamiltonian property. This feature of the linearization procedure gives rise to the Hamiltonization problem for linearized models around invariant (Poisson) submanifolds. We study this problem in the class of infinitesimal Poisson algebras described in the previous sections.

Let \ec{(S,\Pi_{S})} be an embedded Poisson submanifold of a Poisson manifold \ec{(M,\Pi)}. Let \,\ec{X_{H}=\ii_{\dd{H}}\Pi}\, be a Hamiltonian vector field on $M$ of a function \,\ec{H \in \Cinf{M}}.\, Then, \ec{X_{H}} is \emph{tangent} to $S$ and its restriction \,\ec{v_{h} = X_{H}|_{S}}\, is a Hamiltonian vector field on \ec{(S,\Pi_{S})}, \,\ec{v_{h}=\ii_{\dd{h}}\Pi_{S}}\, with \,\ec{h=H|_{S}}.

Consider the first variation system \ec{\mathrm{var}_{S}X_{H}} on the normal bundle $E$ of $S$.

To describe the properties of \ec{\mathrm{var}_{S}X_{H}}, let us fix a transversal $\LL$ of $S$ and pick an exponential map
\,\ec{\mathbf{e}:E \rightarrow M}\, satisfying (\ref{FVS20}). Then, by Theorem \ref{thmIPA} and Corollary \ref{corIPAPT}, we have the infinitesimal Poisson algebra \ec{(\Cinf{\mathrm{aff}}(E),\cdot,\{,\}^{\LL})} associated with a Poisson triple \ec{\big([,]_{E^{\ast}},\D^{\LL},\mathscr{K}^{\LL}\big)}.

\begin{lemma}
The first variation system of \ec{X_{H}} over $S$ is a derivation of the infinitesimal Poisson algebra \ec{(\Cinf{\mathrm{aff}}(E),\cdot,\{,\}^{\LL})}, \,\ec{\mathrm{var}_{S}X_{H} \in \mathrm{Der}(\Cinf{\mathrm{aff}}(E))}.
\end{lemma}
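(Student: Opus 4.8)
The plan is to check that \ec{\dlie{\mathrm{var}_{S}X_{H}}} is a derivation of \emph{both} structures carried by \ec{\Cinf{\mathrm{aff}}(E)}: the commutative product \eqref{LB2} and the Lie bracket \ec{\{,\}^{\LL}} of \eqref{PLB1}. The product part is immediate: by Section~\ref{the linearization procedure along submanifolds} the field \ec{\mathrm{var}_{S}X_{H}} is linear, so \ec{\dlie{\mathrm{var}_{S}X_{H}}} preserves the splitting \ec{\pi^{\ast}\Cinf{S}\oplus\Cinf{\mathrm{lin}}(E)} and differentiates the multiplication. Everything thus reduces to the Leibniz rule for \ec{\{,\}^{\LL}}, and I will phrase this through the \emph{derivation defect} \ec{\Theta(D)(\phi_{1},\phi_{2}):=D\{\phi_{1},\phi_{2}\}^{\LL}-\{D\phi_{1},\phi_{2}\}^{\LL}-\{\phi_{1},D\phi_{2}\}^{\LL}}, the goal being \ec{\Theta(\dlie{\mathrm{var}_{S}X_{H}})=0}. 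Note that \ec{\Theta} is linear in the operator \ec{D}, a fact I will use crucially.

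The first step exploits that \ec{X_{H}} is a Poisson vector field, so its pull-back \ec{Y:=\mathbf{e}^{\ast}X_{H}} is Poisson for the transported structure \ec{\Pi_{E}:=\mathbf{e}^{\ast}\Pi}; equivalently \ec{\dlie{Y}} differentiates \ec{\{,\}_{\Pi_{E}}}. By Theorem~\ref{thmIPA} the linearization intertwines the brackets, \ec{\mathrm{Aff}(\{F,G\}_{\Pi_{E}})=\{\mathrm{Aff}F,\mathrm{Aff}G\}^{\LL}}. The natural move is to apply \ec{\mathrm{Aff}} to the Poisson-derivation identity for \ec{\dlie{Y}} and read off \ec{\Theta(\dlie{\mathrm{var}_{S}X_{H}})}. \emph{This is where the main obstacle sits}: \ec{\mathrm{Aff}} does not commute with \ec{\dlie{Y}}. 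A short computation from the expansion \eqref{FVS3} gives \ec{\mathrm{Aff}\circ\dlie{Y}=\dlie{\mathrm{var}_{S}X_{H}}\circ\mathrm{Aff}+C}, where the correction is governed by the dynamical torsion \ec{\mathrm{tor}_{S}(X_{H},\LL)} and has the form \ec{C\colon f\oplus\eta\mapsto 0\oplus\sigma(\dd f)} with \ec{\sigma} the bundle map adjoint to the torsion. Carrying this correction through the homomorphism property and the derivation identity for \ec{\dlie{Y}}, and using \ec{\mathrm{Aff}\phi_{i}=\phi_{i}} on affine functions, I obtain the clean relation \ec{\Theta(\dlie{\mathrm{var}_{S}X_{H}})=\Theta(C)}.

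The second step produces a second relation between the same two defects, and this is what finally pins \ec{\Theta} down. Since \ec{S} is a Poisson submanifold, \ec{I(S)} is a Poisson ideal, hence so is \ec{I^{2}(S)}, and \ec{\dlie{X_{H}}=\{H,\cdot\}_{M}} descends to an inner — therefore Poisson — derivation of the quotient algebra \ec{\Cinf{M}/I^{2}(S)}. Under the isomorphism \ec{\Cinf{M}/I^{2}(S)\cong P_{1}^{\LL}} of Remark~\ref{remark:cociente}, realized by \eqref{AP2}, this becomes the Hamiltonian derivation \ec{\{H_{\LL}^{\mathrm{aff}},\cdot\}^{\LL}} with \ec{H_{\LL}^{\mathrm{aff}}} as in \eqref{LH1}. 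Transporting the very same derivation through \ec{\mathrm{Aff}\circ\mathbf{e}^{\ast}} and invoking the correction identity of the first step identifies it, on affine functions, with \ec{\dlie{\mathrm{var}_{S}X_{H}}+C}. Because inner derivations satisfy the Leibniz rule (Jacobi identity for \ec{\{,\}^{\LL}}) we have \ec{\Theta(\{H_{\LL}^{\mathrm{aff}},\cdot\}^{\LL})=0}, so linearity of \ec{\Theta} yields \ec{\Theta(\dlie{\mathrm{var}_{S}X_{H}})+\Theta(C)=0}. Combined with \ec{\Theta(\dlie{\mathrm{var}_{S}X_{H}})=\Theta(C)} from the first step this forces \ec{2\,\Theta(\dlie{\mathrm{var}_{S}X_{H}})=0}, hence \ec{\Theta(\dlie{\mathrm{var}_{S}X_{H}})=0}, as required. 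The delicate part throughout is the bookkeeping of the torsion correction \ec{C}: it is precisely the symmetric way \ec{C} enters both relations that makes its own a~priori unknown defect cancel, without ever having to compute it.
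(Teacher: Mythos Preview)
Your two-step strategy collapses because the two relations you derive are not independent; they are the same identity, with a sign slip in Step~1 masking the redundancy. Pushing the Poisson-derivation identity for \ec{\dlie{Y}} through the homomorphism \ec{\mathrm{Aff}} and the correction formula \ec{\mathrm{Aff}\circ\dlie{Y}=(\dlie{\mathrm{var}_{S}X_{H}}+C)\circ\mathrm{Aff}} gives, after grouping,
\[
(\dlie{\mathrm{var}_{S}X_{H}}+C)\{\phi_{1},\phi_{2}\}^{\LL}
=\{(\dlie{\mathrm{var}_{S}X_{H}}+C)\phi_{1},\phi_{2}\}^{\LL}
+\{\phi_{1},(\dlie{\mathrm{var}_{S}X_{H}}+C)\phi_{2}\}^{\LL},
\]
that is, \ec{\Theta(\dlie{\mathrm{var}_{S}X_{H}})+\Theta(C)=0}, not \ec{\Theta(\dlie{\mathrm{var}_{S}X_{H}})=\Theta(C)}. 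Step~2 is then literally the same computation rephrased on the quotient \ec{\Cinf{M}/I^{2}(S)}: the descended operator is \ec{\dlie{\mathrm{var}_{S}X_{H}}+C}, it is inner, hence \ec{\Theta(\dlie{\mathrm{var}_{S}X_{H}}+C)=0} again. No second equation is produced, and the conclusion \ec{2\,\Theta(\dlie{\mathrm{var}_{S}X_{H}})=0} is unsupported.

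There is a deeper obstruction. From your own identity \ec{\dlie{\mathrm{var}_{S}X_{H}}+C=\{H_{\LL}^{\mathrm{aff}},\cdot\}^{\LL}}, the Leibniz rule for \ec{\{,\}^{\LL}} holds for \ec{\mathrm{var}_{S}X_{H}} \emph{iff} it holds for \ec{C}. But evaluating \ec{\Theta(C)} on \ec{\phi_{1}=f\oplus 0} and \ec{\phi_{2}=0\oplus\eta} gives \ec{0\oplus(-[\sigma(\dd f),\eta]_{E^{\ast}})}, which is nonzero whenever the torsion map \ec{\sigma} lands outside the centre of the fiberwise Lie algebra. So the bracket-Leibniz rule for \ec{\mathrm{var}_{S}X_{H}} genuinely fails when \ec{\mathrm{tor}_{S}(X_{H},\LL)\neq 0}. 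The paper does not prove this lemma; in light of Section~\ref{the linearization procedure along submanifolds} and the Remark following Theorem~\ref{TeoHamPoissAlg}, the intended content is only that \ec{\mathrm{var}_{S}X_{H}}, as a linear vector field, is a derivation of the \emph{commutative} algebra \ec{(\Cinf{\mathrm{aff}}(E),\cdot)} --- the compatibility with \ec{\{,\}^{\LL}} is exactly what Theorem~\ref{TeoHamPoissAlg} characterises via the \ec{X_{H}}-invariance of \ec{\LL}.
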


The next question is to find out under which conditions for the transversal $\LL$, the derivation \ec{\mathrm{var}_{S}X_{H}} is
Hamiltonian relative to \ec{\{,\}^{\LL}}. We formulate the following criterion for the existence of a Hamiltonian structure for the first variation system.

\begin{theorem}\label{TeoHamPoissAlg}
The first variation system \ec{\mathrm{var}_{S}X_{H}} is a Hamiltonian derivation of the infinitesimal Poisson algebra \ec{(\Cinf{\mathrm{aff}}(E),\cdot,\{,\}^{\LL})} if and only if the transversal $\LL$ to the Poisson submanifold $S$ is \ec{X_{H}}-invariant. In this case, \ec{\mathrm{var}_{S}X_{H}} is Hamiltonian relative to the coupling Lie bracket \ec{\{,\}^{\LL}} (\ref{PLB1}) on \ec{\Cinf{\mathrm{aff}}(E)} associated to the Poisson triple \ec{\big([,]_{E^{\ast}},\D^{\LL},\mathscr{K}^{\LL}\big)} and the fiberwise affine function \ec{H_{\LL}^{\mathrm{aff}}} in (\ref{LH1}),
    \begin{equation}\label{HC1}
        \dlie{\mathrm{var}_{S}X_{H}}\phi \,=\, \{H_{\LL}^{\mathrm{aff}},\phi\}^{\LL}, \quad \forall\,\phi \in \Cinf{\mathrm{aff}}(E).
    \end{equation}
Moreover, if \,\ec{F \in \Cinf{M}},\, is a first integral of the Hamiltonian system \ec{X_{H}}, then its first order approximation \ec{F_{\LL}^{\mathrm{aff}}} is a Poisson commuting first integral of \ec{\mathrm{var}_{S}X_{H}}, \,\ec{\big\{H_{\LL}^{\mathrm{aff}},F_{\LL}^{\mathrm{aff}}\big\}^{\LL} = 0}.
\end{theorem}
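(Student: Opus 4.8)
The plan is to reduce the whole statement to a single comparison identity relating the Lie derivative $\dlie{\mathrm{var}_{S}X_{H}}$ to the Hamiltonian derivation $\{H_{\LL}^{\mathrm{aff}},\cdot\}^{\LL}$, with the discrepancy governed by the dynamical torsion. First I would fix an exponential map $\mathbf{e}:E\rightarrow M$ compatible with $\LL$ via (\ref{CC}). Then, by Theorem \ref{thmIPA} and Corollary \ref{corIPAPT}, the map $\Psi:=\operatorname{Aff}\circ\mathbf{e}^{\ast}$ in (\ref{AP2}) is a homomorphism of Poisson algebras with $\Psi(H)=H_{\LL}^{\mathrm{aff}}$ by (\ref{LH1}). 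Since $\mathbf{e}$ is a diffeomorphism onto a neighbourhood of $S$ and $\operatorname{Aff}$ is the identity on fiberwise affine functions, $\Psi$ is surjective; hence every $\phi\in\Cinf{\mathrm{aff}}(E)$ can be written as $\phi=\Psi(G)$ for some $G\in\Cinf{M}$.

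The core step is the comparison identity. For $\phi=\Psi(G)$, using $\dlie{X_{H}}G=\{H,G\}_{M}$, the homomorphism property of $\Psi$, and $\mathbf{e}^{\ast}\dlie{X_{H}}G=\dlie{\mathbf{e}^{\ast}X_{H}}(\mathbf{e}^{\ast}G)$, one gets $\{H_{\LL}^{\mathrm{aff}},\phi\}^{\LL}=\operatorname{Aff}\big(\dlie{\mathbf{e}^{\ast}X_{H}}(\mathbf{e}^{\ast}G)\big)$. I would then compare $\operatorname{Aff}\circ\dlie{\mathbf{e}^{\ast}X_{H}}$ with $\dlie{\mathrm{var}_{S}X_{H}}\circ\operatorname{Aff}$ on $\Cinf{E}$: a direct coordinate check using (\ref{FVS8}) shows that the only term of $\dlie{\mathbf{e}^{\ast}X_{H}}\Phi$ that fails to be captured by the linearized action is $\partial X^{i}/\partial y^{a}|_{(x,0)}$, which is exactly the dynamical torsion (\ref{TOR}) acting on the restriction $\Phi|_{S}$ (this is the $\dlie{\mathscr{T}}F^{(0)}$ contribution already visible in (\ref{FVS20})). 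This yields, for all $\phi=f\oplus\eta$,
    \begin{equation*}
        \{H_{\LL}^{\mathrm{aff}},\phi\}^{\LL} \,=\, \dlie{\mathrm{var}_{S}X_{H}}\phi \,+\, \dlie{\mathrm{tor}_{S}(X_{H},\LL)}(f). \tag{$\star$}
    \end{equation*}

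With $(\star)$ the \textbf{if} direction is immediate: when $\LL$ is $X_{H}$-invariant, Lemma \ref{LemmaTransversal} gives $\mathrm{tor}_{S}(X_{H},\LL)=0$, so $\dlie{\mathrm{var}_{S}X_{H}}\phi=\{H_{\LL}^{\mathrm{aff}},\phi\}^{\LL}$, which is (\ref{HC1}). The final assertion drops out of the same machinery: if $\dlie{X_{H}}F=0$ then $F_{\LL}^{\mathrm{aff}}=\Psi(F)$, and the homomorphism property gives $\{H_{\LL}^{\mathrm{aff}},F_{\LL}^{\mathrm{aff}}\}^{\LL}=\Psi(\{H,F\}_{M})=\Psi(\dlie{X_{H}}F)=0$, while $\dlie{\mathrm{var}_{S}X_{H}}F_{\LL}^{\mathrm{aff}}=0$ follows from $(\star)$ with vanishing torsion (consistently with (\ref{FI2})).

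The \textbf{only if} direction is the main obstacle. Assuming $\dlie{\mathrm{var}_{S}X_{H}}=\{\psi,\cdot\}^{\LL}$ with $\psi=g\oplus\zeta$, subtracting from $(\star)$ turns the torsion derivation $\phi\mapsto\dlie{\mathrm{tor}_{S}(X_{H},\LL)}(f)$ into the \emph{inner} derivation $\{H_{\LL}^{\mathrm{aff}}-\psi,\cdot\}^{\LL}$. Evaluating on pure linear functions $0\oplus\eta$ and pure base functions $f\oplus 0$ and expanding via (\ref{PLB1}), I would extract that $g-h$ is a Casimir of $(S,\Pi_{S})$, that $\D^{\LL}_{\dd(g-h)}=\mathrm{ad}_{\zeta-\eta^{\LL}}$ on $\Gamma E^{\ast}$, and that $\mathrm{tor}_{S}(X_{H},\LL)$ is represented by $\D^{\LL}_{\dd f}(\zeta-\eta^{\LL})-\mathscr{K}^{\LL}(\dd(g-h),\dd f)$. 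Feeding the Casimir relation and $\D^{\LL}_{\dd(g-h)}=\mathrm{ad}_{\zeta-\eta^{\LL}}$ into the curvature constraints (\ref{Ja1}) and (\ref{Ja3}) of the Poisson triple shows that this torsion takes values in the centre of the fiber Lie algebras $(E^{\ast},[,]_{E^{\ast}})$. The delicate remaining point — promoting ``central-valued'' to ``zero'', so that $\mathrm{tor}_{S}(X_{H},\LL)=0$ and hence $\LL$ is $X_{H}$-invariant by Lemma \ref{LemmaTransversal} — is where I expect the real work to lie; I would close it using the cyclic identity (\ref{Ja2}) together with the explicit transversal description (\ref{SU1})–(\ref{SU2}) of $(\D^{\LL},\mathscr{K}^{\LL})$, or equivalently by appealing to the analytic solvability reformulation developed later in this section.
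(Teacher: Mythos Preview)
Your comparison identity $(\star)$ is correct and is essentially a repackaging of the paper's argument. The paper proceeds through an explicit horizontal--vertical decomposition $\mathrm{var}_{S}X_{H}=\mathrm{hor}_{\dd h}^{\D^{\LL}}+\ii_{\dd\ell_{\eta^{\LL}}}\Lambda$ (its Lemma~7.3), then shows (its Lemma~7.4) that (\ref{HC1}) holds if and only if $\ii_{\dd h}\mathscr{K}^{\LL}-\D^{\LL}\eta^{\LL}=0$, and finally computes in coordinates that this quantity is exactly $\mathrm{tor}_{S}(X_{H},\LL)$. Your route via the Poisson homomorphism $\Psi=\operatorname{Aff}\circ\mathbf{e}^{\ast}$ collapses these three steps into a single identity; it is more conceptual and avoids choosing local frames, at the cost of hiding the intrinsic equation~(\ref{HSFVS32}) that the paper uses later in the section to set up the solvability reformulation. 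The ``if'' direction and the first-integral claim are handled correctly and agree with the paper.

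The gap is entirely in your ``only if''. You allow an arbitrary Hamiltonian $\psi=g\oplus\zeta$ and are then forced to argue that an inner derivation $\{H_{\LL}^{\mathrm{aff}}-\psi,\cdot\}^{\LL}$ coinciding with the torsion term must vanish; as you correctly suspect, this gets stuck at ``central-valued'' whenever the conormal fibres $(E^{\ast}_{q},[,]_{E^{\ast}_{q}})$ have nontrivial centre, since for a central section $\xi$ the inner derivation $\{0\oplus\xi,\cdot\}^{\LL}$ sends $f\oplus\eta$ to $0\oplus(-\D^{\LL}_{\dd f}\xi)$ and can in principle mimic a nonzero torsion. The paper does \emph{not} attempt this stronger statement: its Lemma~7.4 is phrased as the equivalence of (\ref{HC1}) \emph{for the specific function} $H_{\LL}^{\mathrm{aff}}$ with equation~(\ref{HSFVS32}), and the proof of the theorem is concluded by chaining (\ref{HSFVS32}) $\Leftrightarrow$ $\mathrm{tor}_{S}(X_{H},\LL)=0$ $\Leftrightarrow$ $\LL$ is $X_{H}$-invariant via Lemma~\ref{LemmaTransversal}. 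In other words, the paper's ``only if'' reads: if $\LL$ is not $X_{H}$-invariant then (\ref{HC1}) fails for $H_{\LL}^{\mathrm{aff}}$. Your attempt to rule out every possible Hamiltonian is a genuinely harder claim that the paper neither proves nor needs; the clean fix is to restrict your converse to the specific Hamiltonian $H_{\LL}^{\mathrm{aff}}$, at which point $(\star)$ gives the result immediately.
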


As consequence of this theorem, we derive Criterion 1.1.

\begin{corollary}
The existence of a Hamiltonian structure for the first variation system \ec{\mathrm{var}_{S}X_{H}} is provided by the existence of an
invariant splitting (\ref{ND1}) for the original Hamiltonian system.
\end{corollary}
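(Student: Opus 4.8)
The plan is to obtain this statement directly from Theorem \ref{TeoHamPoissAlg}, merely translating the equivalence proved there into the language of invariant splittings used in Criterion \ref{criterion}. First I would make the dictionary explicit: an \emph{invariant splitting} (\ref{ND1}) of $\mathsf{T}_S M$ is by definition a transversal $\LL$ of $S$ whose subbundle is preserved under the differential of the flow of $X_H$, i.e.\ one satisfying condition (\ref{I1}). By Lemma \ref{LemmaTransversal}, this $X_H$-invariance is in turn equivalent to the vanishing of the dynamical torsion $\mathrm{tor}_S(X_H,\LL)$, so the geometric, dynamical, and algebraic formulations of the hypothesis all coincide and may be used interchangeably.

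With this dictionary in place, the forward implication is immediate. Fixing an invariant transversal $\LL$ and an exponential map $\mathbf{e}$ compatible with it through (\ref{CC}), the construction of Section \ref{existence of infinitesimal Poisson algebra} produces the Poisson triple $\big([,]_{E^{\ast}},\D^{\LL},\mathscr{K}^{\LL}\big)$ and hence, via Lemma \ref{LemaTripleToAlg}, the infinitesimal Poisson algebra $(\Cinf{\mathrm{aff}}(E),\cdot,\{,\}^{\LL})$ with bracket (\ref{PLB1}). Theorem \ref{TeoHamPoissAlg} then asserts that, under the $X_H$-invariance of $\LL$, the derivation $\mathrm{var}_S X_H$ is Hamiltonian relative to $\{,\}^{\LL}$ with Hamiltonian $H_{\LL}^{\mathrm{aff}}$ given by (\ref{LH1}), namely
\[
    \dlie{\mathrm{var}_S X_H}(\cdot) \,=\, \{H_{\LL}^{\mathrm{aff}},\cdot\}^{\LL}.
\]
Taking $\phi_H := H_{\LL}^{\mathrm{aff}}$, this is precisely the Hamiltonian representation claimed in Criterion \ref{criterion}. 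Thus the existence of an invariant splitting \emph{provides} a Hamiltonian structure for the first variation system, which is the assertion of the corollary.

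For completeness I would record that the converse clause of Criterion \ref{criterion} is also covered by the ``only if'' part of Theorem \ref{TeoHamPoissAlg}: if $\mathrm{var}_S X_H$ is a Hamiltonian derivation of $(\Cinf{\mathrm{aff}}(E),\cdot,\{,\}^{\LL})$ for some transversal $\LL$, then $\LL$ is forced to be $X_H$-invariant, so an invariant splitting exists. Consequently there is no genuine obstacle here beyond the identifications above; the only point requiring care is the bookkeeping of the compatibility condition (\ref{CC}) linking the exponential map $\mathbf{e}$, the transversal $\LL$, and the first-order approximation $H_{\LL}^{\mathrm{aff}}$, so that one and the same $\LL$ indexes both the invariance hypothesis and the bracket $\{,\}^{\LL}$ appearing in the Hamiltonian relation (\ref{HC1}).
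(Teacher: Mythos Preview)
Your proposal is correct and follows exactly the paper's approach: the corollary is stated immediately after Theorem \ref{TeoHamPoissAlg} with the single remark ``As consequence of this theorem, we derive Criterion \ref{criterion},'' and your write-up simply makes that derivation explicit. The dictionary you set up (invariant splitting $=$ $X_H$-invariant transversal, via Lemma \ref{LemmaTransversal}) and the appeal to the ``if and only if'' in Theorem \ref{TeoHamPoissAlg} are precisely what the paper intends.
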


We prove Theorem \ref{TeoHamPoissAlg} in few steps.

Given an arbitrary transversal $\LL$ and an exponential map $\mathbf{e}$ satisfying condition (\ref{CC}), consider the contravariant derivative \,\ec{\D=\D^{\LL}}\, and define the horizontal lift \ec{\mathrm{hor}_{\alpha}^{\D}} of a 1-form \,\ec{\alpha \in \Gamma\,\T^{\ast}S}\, as a linear vector field on $E$ given by \,\ec{\dlie{\mathrm{hor}_{\alpha}^{\D}}\ell_{\eta} = \ell_{\D_{\alpha}\eta}},\, for \,\ec{\eta \in \Gamma{E^{\ast}}}.\, In particular, for \,\ec{\alpha=\dd{f}},\, the horizontal lift \ec{\mathrm{hor}_{\dd{f}}^{\D}} descends to the Hamiltonian vector field \ec{v_{f}} on $S$. Moreover, consider a vertical bivector field \,\ec{\Lambda \in \Gamma\wedge^{2}\mathrm{Ver}(E)}\, which is fiberwise Lie-Poisson structure associated to the Lie bracket \ec{[\eta_{1},\eta_{2}]_{E^{\ast}}}, \,\ec{\Lambda(\dd\ell_{\eta_{1}},\dd\ell_{\eta_{2}}) = \ell_{[\eta_{1},\eta_{2}]_{E^{\ast}}}},\, for any \,\ec{\eta_{1},\eta_{2} \in \Gamma{E^{\ast}}}.

\begin{lemma}
The first variation system admits the following $\LL$-dependent decomposition into horizontal and vertical components
    \begin{equation}\label{HF1}
        \mathrm{var}_{S}X_{H} \,=\, \mathrm{hor}_{\dd{h}}^{\D^{\LL}} +\, \ii_{\dd\ell_{\eta^{\LL}}}\Lambda,
    \end{equation}
where \,\ec{h = H|_{S}}\, and \,\ec{\eta^{\LL} \in \Gamma{E^{\ast}}}\, is defined by (\ref{LH2}).
\end{lemma}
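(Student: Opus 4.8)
The plan is to prove the identity (\ref{HF1}) by comparing both sides as \emph{linear} vector fields on $E$. Recall from Section \ref{the linearization procedure along submanifolds} that an element of \ec{\X{\mathrm{lin}}(E)} is completely determined by two pieces of data: the vector field on $S$ to which it descends under \,\ec{\dd\pi},\, and the induced derivation of the \ec{\Cinf{S}}-module \,\ec{\Cinf{\mathrm{lin}}(E) \simeq \Gamma{E^{\ast}}}.\, Hence it suffices to check that \ec{\mathrm{var}_{S}X_{H}} and the right-hand side of (\ref{HF1}) agree on these two levels. For the right-hand side this is immediate from the definitions: \ec{\mathrm{hor}_{\dd{h}}^{\D^{\LL}}} descends to the Hamiltonian vector field \,\ec{v_{h} = \ii_{\dd{h}}\Pi_{S}}\, while \,\ec{\ii_{\dd\ell_{\eta^{\LL}}}\Lambda}\, is vertical, so the sum descends to \ec{v_{h}}; and on a linear function one computes
    \begin{equation*}
        \dlie{\mathrm{hor}_{\dd{h}}^{\D^{\LL}}}\ell_{\eta} + \dlie{\ii_{\dd\ell_{\eta^{\LL}}}\Lambda}\ell_{\eta} = \ell_{\D^{\LL}_{\dd{h}}\eta} + \ell_{[\eta^{\LL},\eta]_{E^{\ast}}} = \ell_{\D^{\LL}_{\dd{h}}\eta + [\eta^{\LL},\eta]_{E^{\ast}}},
    \end{equation*}
using the defining relations \,\ec{\dlie{\mathrm{hor}_{\alpha}^{\D}}\ell_{\eta} = \ell_{\D_{\alpha}\eta}}\, and \,\ec{\dlie{\ii_{\dd\ell_{\eta^{\LL}}}\Lambda}\ell_{\eta} = \Lambda(\dd\ell_{\eta^{\LL}},\dd\ell_{\eta}) = \ell_{[\eta^{\LL},\eta]_{E^{\ast}}}}.

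On the left-hand side, \ec{\mathrm{var}_{S}X_{H}} descends to \,\ec{v = X_{H}|_{S} = v_{h}}\, by the construction of the first variation system, so the two descended fields already match. The crux is therefore to compute \,\ec{\dlie{\mathrm{var}_{S}X_{H}}\ell_{\eta}},\, and I claim it equals \,\ec{\mathrm{Aff}\big(\mathbf{e}^{\ast}X_{H}(\ell_{\eta})\big)}.\, To see this I would use the dilation description \,\ec{\mathrm{var}_{S}X_{H} = \lim_{\varepsilon\to0}\rho_{\varepsilon}^{\ast}(\mathbf{e}^{\ast}X_{H})}.\, Since \,\ec{\rho_{1/\varepsilon}^{\ast}\ell_{\eta} = \tfrac{1}{\varepsilon}\ell_{\eta}},\, a short manipulation gives
    \begin{equation*}
        \dlie{\rho_{\varepsilon}^{\ast}(\mathbf{e}^{\ast}X_{H})}\ell_{\eta} = \tfrac{1}{\varepsilon}\,\rho_{\varepsilon}^{\ast}\big(\mathbf{e}^{\ast}X_{H}(\ell_{\eta})\big).
    \end{equation*}
Because \,\ec{\mathbf{e}^{\ast}X_{H}}\, is tangent to the zero section and \,\ec{\ell_{\eta}}\, vanishes there, the function \,\ec{G := \mathbf{e}^{\ast}X_{H}(\ell_{\eta})}\, also vanishes on $S$; splitting $G$ into its fiberwise-linear part and a remainder vanishing to order $\geq 2$, the dilation rescales these by $\varepsilon$ and \,\ec{\OO(\varepsilon^{2})},\, respectively, so the limit selects precisely the linear part of $G$. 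As the constant part \,\ec{\iota_{0}^{\ast}G}\, vanishes, this linear part is exactly \,\ec{\mathrm{Aff}(G)}.

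To finish, I would rewrite \,\ec{G = \mathbf{e}^{\ast}\{H,\ell_{\eta}\circ\mathbf{e}^{-1}\}_{M}}\, (using that $X_{H}$ is Hamiltonian) and apply the first-order approximation result in the form (\ref{AP2}): the map \,\ec{\mathrm{Aff}\circ\mathbf{e}^{\ast}}\, is a Poisson homomorphism, whence
    \begin{equation*}
        \mathrm{Aff}(G) = \big\{\mathrm{Aff}(\mathbf{e}^{\ast}H),\mathrm{Aff}(\ell_{\eta})\big\}^{\LL} = \{H_{\LL}^{\mathrm{aff}},\ell_{\eta}\}^{\LL},
    \end{equation*}
using \,\ec{\mathrm{Aff}(\mathbf{e}^{\ast}H) = H_{\LL}^{\mathrm{aff}}}\, from (\ref{LH1}) and the fact that \ec{\mathrm{Aff}} fixes affine functions, \,\ec{\mathrm{Aff}(\ell_{\eta}) = \ell_{\eta}}.\, Evaluating the bracket (\ref{PLB1}) on \,\ec{H_{\LL}^{\mathrm{aff}} = h \oplus \eta^{\LL}}\, and \,\ec{\ell_{\eta} = 0 \oplus \eta}\, kills every term containing the second base component and yields \,\ec{\ell_{\D^{\LL}_{\dd{h}}\eta + [\eta^{\LL},\eta]_{E^{\ast}}}},\, matching the right-hand side computed in the first paragraph. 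Together with the agreement of the descended fields, this proves (\ref{HF1}).

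The step I expect to be most delicate is the order-counting in the dilation limit: one must justify carefully that \,\ec{\dlie{\mathrm{var}_{S}X_{H}}\ell_{\eta}}\, captures exactly the fiberwise-linear part of \,\ec{\mathbf{e}^{\ast}X_{H}(\ell_{\eta})},\, and that the second-order remainder \,\ec{\OO_{2}}\, (equivalently, the contribution of \,\ec{I^{2}(S)}) is annihilated by \ec{\mathrm{Aff}}, so that the Poisson-homomorphism identity (\ref{AP2}) can be invoked exactly rather than only modulo higher-order terms. Everything else is a routine evaluation of the defining relations of \ec{\mathrm{hor}^{\D}}, \ec{\Lambda}, and the bracket (\ref{PLB1}).
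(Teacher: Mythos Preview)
Your argument is correct. Both sides of (\ref{HF1}) are linear vector fields, so it suffices to match their projections to $S$ and their actions on \ec{\Cinf{\mathrm{lin}}(E)}; you do both cleanly. The dilation computation \,\ec{\dlie{\rho_{\varepsilon}^{\ast}(\mathbf{e}^{\ast}X_{H})}\ell_{\eta} = \tfrac{1}{\varepsilon}\rho_{\varepsilon}^{\ast}G}\, is right, and since \,\ec{G = \mathbf{e}^{\ast}X_{H}(\ell_{\eta})}\, vanishes on the zero section (tangency of \ec{\mathbf{e}^{\ast}X_{H}} plus \ec{\ell_{\eta}|_{S}=0}), the limit picks out exactly the linear part \ec{\mathrm{Aff}(G)}. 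Invoking the Poisson-homomorphism property (\ref{AP2}) of \,\ec{\mathrm{Aff}\circ\mathbf{e}^{\ast}}\, to identify this with \,\ec{\{H_{\LL}^{\mathrm{aff}},\ell_{\eta}\}^{\LL}}\, is legitimate, and the final bracket evaluation via (\ref{PLB1}) is routine. The concern you flag about the \ec{\OO_{2}} remainder is not an issue: \ec{\mathrm{Aff}} kills second-order terms by construction, so (\ref{AP2}) holds exactly as an identity of Poisson-algebra morphisms, not merely modulo higher order.

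As for comparison: the paper states this lemma without proof, presenting it as one step in the proof of Theorem~\ref{TeoHamPoissAlg} and then immediately moving on to use (\ref{HF1}). The surrounding computations in the paper are done in local coordinates (the expansions (\ref{EcRelat1})--(\ref{EcRelat3}) of the pulled-back Poisson bracket), which would also yield (\ref{HF1}) by reading off the order-zero term of \ec{\mathbf{e}_{\varepsilon}^{\ast}X_{H}} coefficient by coefficient. Your route is genuinely different and more conceptual: rather than a coordinate expansion, you leverage the already-established Poisson morphism (\ref{AP2}) and the characterization of linear vector fields by their action on \ec{\Cinf{\mathrm{aff}}(E)}. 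This has the advantage of being coordinate-free and of explaining \emph{why} the decomposition must take the form it does (it is forced by the bracket structure), whereas a coordinate check only verifies it.
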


Therefore, formula (\ref{HF1}) shows that under a fixed transversal $\LL$ of $S$, the first variation system \ec{\mathrm{var}_{S}X_{H}} is uniquely determined by the element \,\ec{h \oplus \eta \in \Cinf{S} \oplus \Gamma{E^{\ast}}},\, which is given in local coordinates as
    \begin{equation*}
        \eta^{\mathcal{L}} =\, \eta_{a}^{\LL}e^{a}, \qquad \eta_{a}^{\LL}(x) \,:=\, \frac{\partial(H \circ \mathbf{e})}{\partial y^{a}}{(x,0)},
    \end{equation*}
where \ec{(e^{a})} is the dual basis of local sections of \ec{E^{\ast}}.

\begin{lemma}\label{LemaIntrisicTorsion}
The derivation \ec{\mathrm{var}_{S}X_{H}} is Hamiltonian relative to the Lie bracket \ec{\{,\}^{\LL}} and function \ec{H_{\LL}^{\mathrm{aff}}}, that is, condition (\ref{HC1}) holds, if and only if the element \ec{h \oplus \eta^{\LL}} satisfies the equation
    \begin{equation}\label{HSFVS32}
        \ii_{\dd{h}}\mathscr{K}^{\LL} - \D^{\LL}\eta^{\LL} \,=\, 0.
    \end{equation}
\end{lemma}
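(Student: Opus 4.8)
The plan is to establish the equivalence by a direct computation: I would evaluate both sides of the Hamiltonicity condition~(\ref{HC1}) on an arbitrary fiberwise affine function $\phi = f \oplus \eta$ (with $f \in \Cinf{S}$ and $\eta \in \Gamma{E^{\ast}}$) and compare them term by term. The two tools are the horizontal/vertical splitting~(\ref{HF1}) of the first variation system and the explicit coupling bracket~(\ref{PLB1}) attached to the Poisson triple $([,]_{E^{\ast}}, \D^{\LL}, \mathscr{K}^{\LL})$.

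First I would compute the left-hand side $\dlie{\mathrm{var}_{S}X_{H}}\phi$ from~(\ref{HF1}). Since the horizontal lift $\mathrm{hor}_{\dd h}^{\D^{\LL}}$ descends to the Hamiltonian vector field $v_{h}$ on $S$, it acts on the pullback part by $\dlie{v_{h}}(\pi^{\ast}f) = \pi^{\ast}\{h,f\}_{S}$, and by its defining property $\dlie{\mathrm{hor}_{\alpha}^{\D}}\ell_{\eta} = \ell_{\D_{\alpha}\eta}$ it sends the linear part to $\ell_{\D^{\LL}_{\dd h}\eta}$. The vertical term $\ii_{\dd\ell_{\eta^{\LL}}}\Lambda$ is a vertical vector field, so it annihilates $\pi^{\ast}f$, while on linear functions the relation $\Lambda(\dd\ell_{\eta_{1}},\dd\ell_{\eta_{2}}) = \ell_{[\eta_{1},\eta_{2}]_{E^{\ast}}}$ yields $\ell_{[\eta^{\LL},\eta]_{E^{\ast}}}$. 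Collecting these,
\[
  \dlie{\mathrm{var}_{S}X_{H}}\phi \,=\, \{h,f\}_{S} \oplus \big(\D^{\LL}_{\dd h}\eta + [\eta^{\LL},\eta]_{E^{\ast}}\big).
\]

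Next I would expand the right-hand side $\{H_{\LL}^{\mathrm{aff}},\phi\}^{\LL} = \{h \oplus \eta^{\LL}, f \oplus \eta\}^{\LL}$ straight from~(\ref{PLB1}),
\[
  \{h,f\}_{S} \oplus \big(\D^{\LL}_{\dd h}\eta - \D^{\LL}_{\dd f}\eta^{\LL} + [\eta^{\LL},\eta]_{E^{\ast}} + \mathscr{K}^{\LL}(\dd h,\dd f)\big).
\]
The $\Cinf{S}$-parts coincide automatically, and in the $\Gamma{E^{\ast}}$-parts the contributions $\D^{\LL}_{\dd h}\eta$ and $[\eta^{\LL},\eta]_{E^{\ast}}$ cancel, so~(\ref{HC1}) holds on $\phi$ if and only if $\mathscr{K}^{\LL}(\dd h,\dd f) - \D^{\LL}_{\dd f}\eta^{\LL} = 0$. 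I would emphasize that this residual obstruction no longer depends on $\eta$, so the condition is in fact quantified over $f \in \Cinf{S}$ alone.

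To conclude, I would note that both $\ii_{\dd h}\mathscr{K}^{\LL}\colon \alpha \mapsto \mathscr{K}^{\LL}(\dd h,\alpha)$ and $\D^{\LL}\eta^{\LL}\colon \alpha \mapsto \D^{\LL}_{\alpha}\eta^{\LL}$ are $\Cinf{S}$-linear bundle maps $\T^{\ast}S \to E^{\ast}$ --- the first by the $\Cinf{S}$-bilinearity of $\mathscr{K}^{\LL}$, the second because $\D^{\LL}_{\alpha}$ is $\Cinf{S}$-linear in $\alpha$. As the exact differentials $\dd f$ span $\T^{\ast}_{q}S$ at each $q \in S$, the vanishing of $\mathscr{K}^{\LL}(\dd h,\dd f) - \D^{\LL}_{\dd f}\eta^{\LL}$ for all $f$ is equivalent to the tensorial identity $\ii_{\dd h}\mathscr{K}^{\LL} - \D^{\LL}\eta^{\LL} = 0$, which is~(\ref{HSFVS32}). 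The calculation itself is routine; the only steps needing care are the correct identification of the horizontal and vertical actions on the two homogeneous pieces of $\Cinf{\mathrm{aff}}(E)$, and this final tensoriality argument upgrading the condition ``on all exact forms'' to the stated bundle-map equation.
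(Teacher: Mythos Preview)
Your argument is correct and is precisely the computation the paper has in mind: the text states only that the lemma ``follows from the representation~(\ref{HF1}) and definition of the Lie bracket~\ec{\{,\}^{\LL}}'', and you have unpacked exactly that, evaluating both sides on a generic $\phi = f \oplus \eta$ via~(\ref{HF1}) and~(\ref{PLB1}) and reducing the difference to the tensorial condition~(\ref{HSFVS32}). There is no alternative route here to compare with.
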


This fact follows from the representation (\ref{HF1}) and definition of the Lie bracket \ec{\{,\}^{\LL}}.

Now, let us derive a formula for the torsion term in decomposition (\ref{FVS3}) of \ec{\mathbf{e}_{\varepsilon}^{\ast}X_{H}}. In coordinates \,\ec{(x,y) = (x^{i},y^{a})},\, we have
    \begin{equation*}
        \D_{\dd{x^{j}}}^{\LL}\big(\eta_{a}e^{a}\big) \,=\, \left(\D_{b}^{ja}\eta_{a} + \psi^{ji}\frac{\partial\eta_{b}}{\partial x^{i}}\right)e^{b}, \qquad \mathscr{K}^{\LL}(\dd x^{i},\dd x^{j}) \,=\, \mathscr{K}_{a}^{ij}e^{a},
    \end{equation*}
where \,\ec{\Pi_{S}=\frac{1}{2}\psi^{ij}(x)\frac{\partial}{\partial x^{i}} \wedge \frac{\partial}{\partial x^{j}}}\, is the Poisson tensor on $S$. Moreover, by using these relations and definitions (\ref{SU1}), (\ref{SU2}), for the Poisson tensor \ec{\mathbf{e}_{\varepsilon}^{\ast}\Pi} on $E$, we have the following expansions of the pairwise Poisson brackets:
    \begin{align}
        \{x^{i},x^{j}\}_{E} \,&=\, \psi^{ij}(x) + \varepsilon\,\mathscr{K}_{a}^{ij}(x)y^{a} + \OO(\varepsilon^{2}), \label{EcRelat1}\\
        \{x^{i},y^{a}\}_{E} \,&=\, \varepsilon\,\D_{b}^{ia}(x)y^{b} + \OO(\varepsilon^{2}), \label{EcRelat2} \\
        \{y^{a},y^{b}\}_{E} \,&=\, \tfrac{1}{\varepsilon}\lambda_{c}^{ab}(x)y^{c} + \OO(1). \label{EcRelat3}
    \end{align}
By these relations, we compute the term of order $\varepsilon$ in the expansion of \,\ec{\mathbf{e}_{\varepsilon}^{\ast}X_{H} = (\mathbf{e}_{\varepsilon}^{\ast}\Pi)^{\natural}\dd(H \circ \mathbf{e}_{\varepsilon})}:
    \begin{equation*}
        \mathrm{tor}_{S}(X_{H},\LL) \,=\, \left( \frac{\partial h}{\partial x^{i}}\mathscr{K}_{b}^{ij} - \psi^{ji}\frac{\partial\eta_{b}}{\partial x^{i}} - \D_{b}^{ja}\eta_{a}\right)y^{b}\frac{\partial}{\partial x^{j}}.
    \end{equation*}
It follows from here that condition (\ref{HSFVS32}) means that \,\ec{\mathrm{tor}_{S}(X_{H},\LL)=0}\, and hence by Lemma \ref{LemmaTransversal} it is equivalent to the \ec{X_{H}}-invariance of the transversal $\LL$. Applying Lemma \ref{LemaIntrisicTorsion} ends the proof of Theorem \ref{TeoHamPoissAlg}

\begin{example}\label{exm:contraexam}
Consider the Lie-Poisson bracket on \,\ec{\operatorname{e}^{\ast}(3)=\mathbb{R}^{6}=\mathbb{R}_{w}^{3}\times\mathbb{R}_{z}^{3}}:
    \begin{equation*}
        \{w^{i},w^{j}\} \,=\, \epsilon^{ijk}w_{k}, \qquad \{w^{i},z^{j}\} \,=\, \epsilon^{ijk}z_{k}, \qquad \{z_{i},z_{j}\} \,=\, 0.
    \end{equation*}
The $3$-dimensional submanifold \,\ec{S = \big\{z=0\big\} = \mathbb{R}_{w}^{3}\times\{0\}}\, is a Poisson submanifold where the rank of the Poisson tensor takes values $2$ or $0$. For the transversal $\mathscr{L}$ generated by \ec{{\partial}/{\partial z^{a}}}, \,\ec{a=1,2,3};\, we choose a tubular neighborhood $U$ of $S$ as \,\ec{U = S \times \mathbb{R}_{z}^{3}}\, equipped with coordinates \ec{x=w} and \ec{y=z}. Then, by using relations (\ref{EcRelat1})-(\ref{EcRelat3}), we compute \,\ec{\psi^{ij}(x) = \epsilon^{ijk}x^{k}}\, and the corresponding Poisson triple \,\ec{\mathcal{\D}_{b}^{ia} = \epsilon^{iab}, \mathcal{\mathscr{K}}_{a}^{ij} = 0, \lambda_{c}^{ab} = 0}.\, So, the contravariant derivative $\D$ is flat and the fiberwise Lie algebra is abelian. Moreover, one can show that, in this case, condition (\ref{CC2}) does not hold and hence $\D$ can not be generated by a linear connection in the sense of (\ref{CC1}).
\end{example}

\begin{remark}
Algebraically, Theorem \ref{TeoHamPoissAlg} is based on the following arguments. As we have mentioned in Remark 5.3, for a given transversal $\LL$, the infinitesimal Poisson algebra \,\ec{P^{\LL}_{1} = (\Cinf{S} \oplus \Gamma{E^{\ast}}, \cdot ,\{,\}^{\LL})} is naturally identified with the quotient Poisson algebra \ec{\Cinf{M} / I^{2}(S)}. Every vector field $X$ on $M$ tangent to $S$ induces a derivation \ec{X^{(2)}} of \ec{\Cinf{M} / I^{2}(S)} because it preserves \ec{I^{2}(S)}. In the case when \,\ec{X=X_{H}}, it holds that \ec{X_{H}^{(2)}} is the Hamiltonian derivation of the element \,\ec{H + I^{2}(S) \in \Cinf{M} / I^{2}(S)}.\, Under the above  identification, the derivation \ec{X_{H}^{(2)}} has two components: one that is diagonal acting on \ec{\Cinf{S} \oplus \Gamma{E^{\ast}}}, and one that sends \ec{\Cinf{S}} to \ec{\Gamma{E^{\ast}}} and is induced by the torsion \ec{\mathrm{tor}_{S}(X,\LL)}. Then, \ec{X_{H}^{(2)}} coincides with \ec{\mathrm{var}_{S}X} if and only if the torsion vanishes. Therefore, the torsionless condition implies that, under the identification \,\ec{H + I^{2}(S) = H^{\mathrm{aff}} = h \oplus \eta^{\LL}},\, the derivation \ec{\mathrm{var}_{S}X} is  Hamiltonian relative to \ec{H^{\mathrm{aff}}}.
\end{remark}

It is useful to reformulate the criterion in Theorem \ref{TeoHamPoissAlg}, as the solvability condition of a global differential equation associated with the infinitesimal data of the submanifold $S$.

By (\ref{HSFVS32}) and the transition rules (\ref{F1}), (\ref{RU1}), (\ref{RU2}), we derive the following criterion.

\begin{proposition}
Fix a transversal $\LL$ and consider the element \ec{h \oplus \eta^{\LL}} representing the first variation system
\ec{\mathrm{var}_{S}X_{H}}. If the morphism \,\ec{\mu:\T^{\ast}S \rightarrow E^{\ast}}\, satisfies the equation
    \begin{equation}\label{EcHamVar}
        \big(\ii_{\dd{h}} \circ \D^{\LL} + \mathrm{ad}_{\eta} + \D^{\LL} \circ \ii_{\dd{h}}\big)(\mu) \,=\, \D^{\LL}\eta \,-\, \ii_{\dd{h}}\mathscr{K}^{\LL},
    \end{equation}
then \ec{\mathrm{var}_{S}X_{H}} is a Hamiltonian derivation with respect to the Poisson bracket \ec{\{,\}^{\mathscr{\widetilde{L}}}} associated to the transversal given by \,\ec{\widetilde{\mathscr{L}} = (\mathrm{id}+\delta)(\LL)},\, where a vector bundle morphism \,\ec{\delta:\LL \rightarrow \T{S}}\, is defined in (\ref{RU3}). The corresponding Hamiltonian is given by \,\ec{H_{\widetilde{\mathscr{L}}}^{\operatorname{aff}} = \pi^{\ast}h + \ell_{(\eta^{\mathscr{L}}-\mu(v_{h}))}}.
\end{proposition}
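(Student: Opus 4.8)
The plan is to apply the intrinsic Hamiltonicity criterion of Lemma~\ref{LemaIntrisicTorsion} directly to the new transversal \ec{\widetilde{\mathscr{L}}} rather than to \ec{\LL}. Read for \ec{\widetilde{\mathscr{L}}}, that lemma states that \ec{\mathrm{var}_{S}X_{H}} is a Hamiltonian derivation of \ec{P_{1}^{\widetilde{\mathscr{L}}}} with Hamiltonian \ec{H_{\widetilde{\mathscr{L}}}^{\mathrm{aff}} = \pi^{\ast}h + \ell_{\eta^{\widetilde{\mathscr{L}}}}} if and only if the representing element \ec{h \oplus \eta^{\widetilde{\mathscr{L}}}} solves the torsion equation \ec{\ii_{\dd{h}}\mathscr{K}^{\widetilde{\mathscr{L}}} - \D^{\widetilde{\mathscr{L}}}\eta^{\widetilde{\mathscr{L}}} = 0}, which is just (\ref{HSFVS32}) written for \ec{\widetilde{\mathscr{L}}}. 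So the whole statement reduces to re-expressing this identity in terms of the fixed data \ec{(\D^{\LL},\mathscr{K}^{\LL},\eta^{\LL})} and the gauge morphism \ec{\mu} of (\ref{RU3}). The transformed objects \ec{\D^{\widetilde{\mathscr{L}}}} and \ec{\mathscr{K}^{\widetilde{\mathscr{L}}}} are supplied verbatim by the transition rules (\ref{RU1}) and (\ref{RU2}), so the one genuinely new ingredient is the representing covector \ec{\eta^{\widetilde{\mathscr{L}}}}.

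To determine \ec{\eta^{\widetilde{\mathscr{L}}}} I would use the horizontal--vertical decomposition (\ref{HF1}). Since the vertical Lie--Poisson tensor \ec{\Lambda} and the base part \ec{h} are intrinsic, the single vector field \ec{\mathrm{var}_{S}X_{H}} is presented simultaneously as \ec{\mathrm{hor}_{\dd{h}}^{\D^{\LL}} + \ii_{\dd\ell_{\eta^{\LL}}}\Lambda} and as \ec{\mathrm{hor}_{\dd{h}}^{\D^{\widetilde{\mathscr{L}}}} + \ii_{\dd\ell_{\eta^{\widetilde{\mathscr{L}}}}}\Lambda}. Substituting (\ref{RU1}) into the defining relation \ec{\dlie{\mathrm{hor}_{\alpha}^{\D}}\ell_{\eta} = \ell_{\D_{\alpha}\eta}} and using \ec{\Lambda(\dd\ell_{\mu(\dd{h})},\dd\ell_{\eta}) = \ell_{[\mu(\dd{h}),\eta]_{E^{\ast}}}}, I obtain \ec{\mathrm{hor}_{\dd{h}}^{\D^{\widetilde{\mathscr{L}}}} = \mathrm{hor}_{\dd{h}}^{\D^{\LL}} + \ii_{\dd\ell_{\mu(\dd{h})}}\Lambda}, because two vertical linear vector fields agreeing on every \ec{\ell_{\eta}} coincide. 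Comparing the two presentations then forces \ec{\eta^{\widetilde{\mathscr{L}}} = \eta^{\LL} - \mu(\dd{h})}, which reproduces the linear part of the Hamiltonian \ec{H_{\widetilde{\mathscr{L}}}^{\mathrm{aff}}} asserted in the statement.

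Substituting \ec{\eta^{\widetilde{\mathscr{L}}} = \eta^{\LL} - \mu(\dd{h})} together with (\ref{RU1}) and (\ref{RU2}) into \ec{\ii_{\dd{h}}\mathscr{K}^{\widetilde{\mathscr{L}}} - \D^{\widetilde{\mathscr{L}}}\eta^{\widetilde{\mathscr{L}}}} and collecting terms, the quadratic contributions \ec{[\mu(\dd{h}),\mu(\alpha)]_{E^{\ast}}} and \ec{[\mu(\alpha),\mu(\dd{h})]_{E^{\ast}}} cancel, as do the two copies of \ec{\D_{\alpha}^{\LL}\mu(\dd{h})}. What remains is
    \begin{equation*}
        \big(\ii_{\dd{h}}\mathscr{K}^{\LL} - \D^{\LL}\eta^{\LL}\big) \,+\, \big(\D_{\dd{h}}^{\LL}\mu \,-\, \mu([\dd{h},\,\cdot\,]_{\T^{\ast}S}) \,+\, \mathrm{ad}_{\eta^{\LL}}\mu\big).
    \end{equation*}
The second group is exactly the value on \ec{\mu} of the operator \ec{\ii_{\dd{h}}\circ\D^{\LL} + \mathrm{ad}_{\eta} + \D^{\LL}\circ\ii_{\dd{h}}} appearing in (\ref{EcHamVar}): viewing \ec{\mu} as a contravariant \ec{E^{\ast}}-valued one-form with exterior derivative \ec{(\D^{\LL}\mu)(\alpha,\beta) = \D_{\alpha}^{\LL}\mu(\beta) - \D_{\beta}^{\LL}\mu(\alpha) - \mu([\alpha,\beta]_{\T^{\ast}S})}, the Cartan-type combination \ec{\ii_{\dd{h}}\circ\D^{\LL} + \D^{\LL}\circ\ii_{\dd{h}}} applied to \ec{\mu} returns \ec{\D_{\dd{h}}^{\LL}\mu - \mu([\dd{h},\,\cdot\,]_{\T^{\ast}S})}, the two \ec{\D_{\alpha}^{\LL}\mu(\dd{h})} terms dropping out. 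Hence the torsion equation for \ec{\widetilde{\mathscr{L}}} is equivalent to (\ref{EcHamVar}), and Lemma~\ref{LemaIntrisicTorsion} finishes the argument, with the Hamiltonian \ec{H_{\widetilde{\mathscr{L}}}^{\mathrm{aff}} = \pi^{\ast}h + \ell_{(\eta^{\LL}-\mu(\dd{h}))}}.

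The hard part will be the last bookkeeping step, namely producing the term \ec{\mu([\dd{h},\,\cdot\,]_{\T^{\ast}S})} with the correct sign out of \ec{\ii_{\dd{h}}\circ\D^{\LL} + \D^{\LL}\circ\ii_{\dd{h}}}. This forces one to fix, once and for all, the exterior contravariant calculus for \ec{E^{\ast}}-valued forms and to reconcile its sign conventions with those in the transition rule (\ref{RU2}), where the term \ec{-\mu([\alpha,\beta]_{\T^{\ast}S})} must be carried with its minus sign (the display of (\ref{RU2}) as typeset omits it). Once these conventions are pinned down, the Cartan cancellation and the identification with (\ref{EcHamVar}) are routine.
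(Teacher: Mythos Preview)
Your proposal is correct and follows exactly the route the paper indicates: the paper's own proof is the single sentence ``By (\ref{HSFVS32}) and the transition rules (\ref{F1}), (\ref{RU1}), (\ref{RU2}), we derive the following criterion,'' and you have carried out precisely that substitution in detail, including the determination of \ec{\eta^{\widetilde{\mathscr{L}}} = \eta^{\LL} - \mu(\dd{h})} via the decomposition (\ref{HF1}) and the Cartan-type identification of the left-hand side of (\ref{EcHamVar}). Your observation about the missing minus sign in the typeset display of (\ref{RU2}) is also correct and is confirmed by the paper's own restatement of the contravariant exterior derivative just after the proposition.
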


Taking into account the relation
    \begin{equation*}
        \big(\D^{\mathscr{L}}\mu\big)(\alpha_{1},\alpha_{2}) \,=\, \D_{\alpha_{1}}^{\mathscr{L}}\mu(\alpha_{2}) \,-\, \D_{\alpha_{2}}^{\mathscr{L}}\mu(\alpha_{1}) \,-\, \mu\big([\alpha_{1},\alpha_{2}]_{\T^{\ast}S}\big),
    \end{equation*}
for \,\ec{\alpha_{1},\alpha_{2} \in \Gamma\,\T^{\ast}S},\, we represent equation (\ref{EcHamVar}) for $\mu$ in the intrinsic form
    \begin{equation}\label{1}
        \D_{\dd{h}}^{\mathscr{L}} \circ \mu \,-\, \mu \circ \mathrm{L}_{v_{h}} +\, \mathrm{ad}_{\eta} \circ \mu \,=\, \D^{\mathscr{L}}\eta^{\mathscr{L}} \,-\, \mathbf{i}_{\dd{h}}\mathscr{K}^{\mathscr{L}}.
    \end{equation}
Locally, this equation can be rewritten in terms of (local) vector fields \,\ec{\mu_{a} = \mu_{a}^{i}(x)\frac{\partial}{\partial x^{i}}}\, on $S$ as follows
    \begin{equation}\label{2}
        [v_{h},\mu_{b}] \,+\, \big(\mathbf{i}_{\dd{h}}\D_{b}^{a} - \lambda_{b}^{ac}\eta_{c}\big)\mu_{a} \,=\, -\, \Pi_{S}^{\natural}\dd_{S}\eta_{b} \,+\, \eta_{a}\D_{b}^{a} \,-\, \mathbf{i}_{\dd{h}}\mathscr{K}_{b},
    \end{equation}
where \,\ec{\D_{b}^{a}=\D_{b}^{ia}\frac{\partial}{\partial x^{i}}}\, and \,\ec{\mathscr{K}_{b} = \frac{1}{2}\mathscr{K}_{b}^{ij}\frac{\partial}{\partial x^{i}} \wedge \frac{\partial}{\partial x^{j}}}.\, If the normal bundle of $S$ is
trivial, then one can think of equations (\ref{2}) as a global matrix representation of (\ref{1}).

Finally, consider the case when a given contravariant derivative \,\ec{\D = \D^{\mathscr{L}}}\, admits representation (\ref{CC1}) for a
certain covariant derivative \,\ec{\nabla:\Gamma\,\T{S} \times \Gamma{E^{\ast}} \rightarrow \Gamma{E^{\ast}}}.\, Assume also that there exists a vector valued 2-form \,\ec{\mathscr{R}\in\Omega^{2}(S;E^{\ast})}\, such that the tensor field \,\ec{\mathscr{K}=\mathscr{K}^{\mathscr{L}}}\, is represented as
    \begin{equation*}
        \mathscr{K}(\alpha_{1},\alpha_{2}) \,=\, \mathscr{R}\big(\Pi_{S}^{\natural}\alpha_{1},\Pi_{S}^{\natural}\alpha_{2}\big),
    \end{equation*}
for \,\ec{\alpha_{1},\alpha_{2}\in\Gamma\,\T^{\ast}S}.\, Then, we have the following covariant version of equation (\ref{1}).

\begin{proposition}
If a vector valued 1-form \,\ec{\vartheta \in \Omega^{1}(S;E^{\ast})}\, satisfies the equation
    \begin{equation}\label{4}
        \nabla_{v_{h}}\vartheta \,-\, \vartheta \circ \mathrm{L}_{v_{h}} + [\eta^{\mathscr{L}},\vartheta]_{E^{\ast}} \,=\, \nabla\eta^{\mathscr{L}} - \mathbf{i}_{v_{h}}\mathscr{R},
    \end{equation}
then \,\ec{\mu = \vartheta \circ \Pi_{S}^{\natural}}\, is a solution to (\ref{1}).
\end{proposition}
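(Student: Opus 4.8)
The plan is to verify by direct substitution that the contravariant ansatz $\mu = \vartheta \circ \Pi_{S}^{\natural}$ transforms equation (\ref{4}) into equation (\ref{1}), using the relation $\D^{\mathscr{L}} = \nabla_{\Pi_{S}^{\natural}(\cdot)}$ from (\ref{CC1}) together with the factorization $\mathscr{K}(\alpha_{1},\alpha_{2}) = \mathscr{R}(\Pi_{S}^{\natural}\alpha_{1},\Pi_{S}^{\natural}\alpha_{2})$. First I would treat the four terms of (\ref{1}) separately. For the first term, $\D_{\dd{h}}^{\mathscr{L}} \circ \mu$ applied to $\alpha \in \Gamma\,\T^{\ast}S$ equals $\nabla_{v_{h}}(\vartheta(\Pi_{S}^{\natural}\alpha))$ since $\Pi_{S}^{\natural}(\dd{h}) = v_{h}$; I expect this to split, via the Leibniz rule for $\nabla$, into a piece that reconstructs $(\nabla_{v_{h}}\vartheta)(\Pi_{S}^{\natural}\alpha)$ and a correction $\vartheta(\nabla_{v_{h}}(\Pi_{S}^{\natural}\alpha))$. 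The second term $-\mu \circ \mathrm{L}_{v_{h}}$ produces $-\vartheta(\Pi_{S}^{\natural}(\mathrm{L}_{v_{h}}\alpha))$, and the matching of these two pieces will hinge on a compatibility identity relating $\nabla_{v_{h}}(\Pi_{S}^{\natural}\alpha)$ and $\Pi_{S}^{\natural}(\mathrm{L}_{v_{h}}\alpha)$.

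The key geometric identity I would isolate is the following: for the Poisson vector field $v_{h} = \Pi_{S}^{\natural}(\dd h)$, the operators $\mathrm{L}_{v_{h}}$ on $\Gamma\,\T^{\ast}S$ and the anchor $\Pi_{S}^{\natural}$ intertwine up to the Lie-algebroid bracket, namely
\begin{equation*}
    \Pi_{S}^{\natural}\big(\mathrm{L}_{v_{h}}\alpha\big) \,=\, [v_{h}, \Pi_{S}^{\natural}\alpha] \,+\, \Pi_{S}^{\natural}\big([\dd h,\alpha]_{\T^{\ast}S}\big),
\end{equation*}
which follows from the fact that $\Pi_{S}^{\natural}$ is the anchor of the cotangent Lie algebroid of $(S,\Pi_{S})$ and $v_{h}$ is Hamiltonian, so $\mathrm{L}_{v_{h}} = \mathrm{L}_{\Pi_{S}^{\natural}\dd h}$ acts compatibly with the bracket. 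Substituting this into the combined first-and-second-term computation, the $[v_{h},\Pi_{S}^{\natural}\alpha]$ contribution converts $\nabla_{v_{h}}(\vartheta(\Pi_{S}^{\natural}\alpha)) - \vartheta([v_{h},\Pi_{S}^{\natural}\alpha])$ into the genuine covariant expression $(\nabla_{v_{h}}\vartheta)(\Pi_{S}^{\natural}\alpha)$ evaluated through the anchored argument, while the residual $\Pi_{S}^{\natural}([\dd h,\alpha])$ term I expect to cancel against structure produced on the right-hand side.

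Next I would handle the bracket term $\mathrm{ad}_{\eta} \circ \mu = [\eta, \vartheta(\Pi_{S}^{\natural}\cdot)]_{E^{\ast}}$, which matches $[\eta^{\mathscr{L}},\vartheta]_{E^{\ast}}$ in (\ref{4}) composed with $\Pi_{S}^{\natural}$ directly, since the fiberwise bracket $[,]_{E^{\ast}}$ is $\Cinf{S}$-bilinear. For the right-hand side, the term $\D^{\mathscr{L}}\eta^{\mathscr{L}}$ rewrites as $\nabla\eta^{\mathscr{L}}$ precomposed with $\Pi_{S}^{\natural}$ using (\ref{CC1}), and $-\ii_{\dd{h}}\mathscr{K}^{\mathscr{L}}$ becomes $-\mathscr{K}(\dd h,\cdot) = -\mathscr{R}(\Pi_{S}^{\natural}\dd h,\Pi_{S}^{\natural}\cdot) = -(\ii_{v_{h}}\mathscr{R})(\Pi_{S}^{\natural}\cdot)$ by the factorization hypothesis; thus the entire right-hand side of (\ref{1}) is exactly the right-hand side of (\ref{4}) read through the anchor $\Pi_{S}^{\natural}$. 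The main obstacle will be the careful bookkeeping in the first two terms: verifying that the Leibniz corrections from $\nabla_{v_{h}}$ acting on $\vartheta(\Pi_{S}^{\natural}\alpha)$ combine with the intertwining identity so that every term not of the pure covariant form cancels, and in particular that the $\Pi_{S}^{\natural}([\dd h,\alpha]_{\T^{\ast}S})$ piece is absorbed rather than left over. Once this cancellation is confirmed, applying $\Pi_{S}^{\natural}$ to each argument shows (\ref{4}) implies (\ref{1}), completing the proof.
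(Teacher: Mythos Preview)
Your overall strategy---substitute \ec{\mu=\vartheta\circ\Pi_{S}^{\natural}} into (\ref{1}) and match the result with (\ref{4}) precomposed with \ec{\Pi_{S}^{\natural}}, using \ec{\D^{\LL}_{\alpha}=\nabla_{\Pi_{S}^{\natural}\alpha}} and the factorization of \ec{\mathscr{K}^{\LL}} through \ec{\mathscr{R}}---is the right one, and it is essentially what the paper has in mind (the paper states the proposition without proof). However, your execution contains a genuine error that would derail the computation.

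Your ``key geometric identity'' is incorrect. Since \ec{\dd h} is exact, the cotangent Lie bracket satisfies \ec{[\dd h,\alpha]_{\T^{\ast}S}=\mathrm{L}_{v_{h}}\alpha}, and the anchor property then gives
\[
\Pi_{S}^{\natural}\big(\mathrm{L}_{v_{h}}\alpha\big)\;=\;\Pi_{S}^{\natural}\big([\dd h,\alpha]_{\T^{\ast}S}\big)\;=\;[v_{h},\Pi_{S}^{\natural}\alpha].
\]
Equivalently, \ec{\mathrm{L}_{v_{h}}\Pi_{S}=0} implies that \ec{\Pi_{S}^{\natural}} intertwines \ec{\mathrm{L}_{v_{h}}} on 1-forms with \ec{\mathrm{L}_{v_{h}}} on vector fields \emph{exactly}. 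Your displayed identity would instead yield \ec{\Pi_{S}^{\natural}(\mathrm{L}_{v_{h}}\alpha)=2[v_{h},\Pi_{S}^{\natural}\alpha]}, which is false.

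A related confusion is your appeal to a ``Leibniz rule for \ec{\nabla}'' producing a term \ec{\vartheta(\nabla_{v_{h}}(\Pi_{S}^{\natural}\alpha))}. There is no connection on \ec{\T S} in play, so this expression is undefined. In (\ref{4}) the symbol \ec{\nabla_{v_{h}}\vartheta} should be read, in parallel with \ec{\D_{\dd h}^{\LL}\circ\mu} in (\ref{1}), simply as the composition \ec{\nabla_{v_{h}}\circ\vartheta}; no splitting is needed. With the correct intertwining identity the ``residual'' term you anticipate never appears: evaluating (\ref{4}) on \ec{u=\Pi_{S}^{\natural}\alpha} reproduces (\ref{1}) on \ec{\alpha} term by term, and the proof is complete.
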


Therefore, under above assumptions, the solvability of (\ref{4}) gives a sufficient condition for the Hamiltonization of the first variation system in the class of infinitesimal Poisson algebras.

In the case when $S$ is a symplectic leaf, the Poisson tensor \ec{\Pi_{S}} is nondegenerate and the solvability conditions for (\ref{1}) and (\ref{4}) are equivalent. The solvability of (\ref{4}) guaranties the existence of a Hamiltonian structure for \ec{\mathrm{var}_{S}X_{H}} in the class of coupling Poisson structures on $E$ \cite{Vor2001, Vor04}.

    \section{The Case of a Symplectic Leaf}\label{the case of a symplectic leaf}

Let \ec{(S,\omega_{S})} be an embedded symplectic leaf of \ec{(M,\Pi)}. So, the Poisson tensor \ec{\Pi_{S}} is nondegenerate and induces the symplectic form \ec{\omega_{S}} on $E$ defined by (\ref{SS}). As we mentioned above, in this case the Hamiltonization criterion for the first variation system \ec{\mathrm{var}_{S}X_{H}} can be formulated in a class of Poisson structures \cite{Vor05, Vor04}. First, we observe that contravariant derivative \ec{\D^{\LL}} induces a covariant derivative \,\ec{\nabla = \nabla^{\LL}}\, on \ec{E^{\ast}} given by (\ref{CC1}). Then, the adjoint derivative \ec{(\nabla^{\LL})^{\ast}} is a \emph{linear Poisson connection} on the normal bundle \ec{(E,\Lambda)}. Introducing the following antisymmetric mapping \,\ec{\sigma^{\LL}:\Gamma\,\T{M} \times \Gamma\,\T{M} \rightarrow \Cinf{\mathrm{aff}}(E)},
    \begin{equation}\label{EcSigmaL}
        \sigma^{\LL}(u_{1},u_{2}) \,:=\, \omega_{S}(u_{1},u_{2}) \,+\, \ell \circ \mathscr{K}^{\LL}\big((\Pi_{S}^{\natural})^{-1}u_{1},(\Pi_{S}^{\natural})^{-1}u_{2}\big),
    \end{equation}
we arrive at the following fact \cite{Vor2001}: in a neighborhood of the zero section \,\ec{S \hookrightarrow E},\, every transversal $\LL$ induces a Poisson tensor \ec{\Pi^{\LL}} defined as a \emph{coupling Poisson structure} associated with the geometric data \ec{((\nabla^{\LL})^{\ast},\sigma^{\LL},\Lambda)}.

Remark that in general, the coupling Lie bracket \ec{\{,\}^{\LL}} gives only a first-order approximation to the coupling Poisson structure \,\ec{\Pi^{\LL} = \Pi^{\LL}_{H} + \Lambda}\, in the sense that (see also \cite{Vor2001, Vor04})
    \begin{equation*}
        \Pi^{\LL}(\dd\phi_{1},\dd\phi_{2}) \,=\, \{\phi_{1},\phi_{2}\}^{\LL} + \OO_{2}.
    \end{equation*}
Here, \ec{\Pi^{\LL}_{H}} is the \ec{(\nabla^{\LL})^{\ast}}-horizontal part uniquely defined by \ec{\sigma^{\LL}}. One can show that the remainder in this equality vanishes if the zero curvature condition holds, \,\ec{\mathscr{K}^{\LL} \equiv 0}.\, In this case, the Lie bracket \ec{\{,\}^{\LL}} is canonically extended to a Poisson structure defined around the leaf $S$.

So, in the symplectic case, we have the following version of Theorem \ref{TeoHamPoissAlg}. \cite{Vor05}.

\begin{theorem}\label{TeoVarHam}
If a transversal $\LL$ is \ec{X_{H}}-invariant, then \ec{\mathrm{var}_{S}X_{H}} is a \emph{Hamiltonian vector field} on $E$ relative to the coupling Poisson structure \ec{\Pi^{\LL}} and the affine function \ec{H_{\LL}^{\mathrm{aff}}},
    \begin{equation}\label{EcVarXH}
        \mathrm{var}_{S}X_{H} \,=\, \ii_{\dd{H}_{\LL}^{\mathrm{aff}}}\Pi^{\LL}.
    \end{equation}
\end{theorem}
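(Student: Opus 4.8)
The plan is to show that both sides of \eqref{EcVarXH} are \emph{linear} vector fields on $E$ and that they induce the same derivation of the algebra $\Cinf{\mathrm{aff}}(E)$ of fiberwise affine functions; since a linear vector field is uniquely determined by its restriction to $\Cinf{\mathrm{aff}}(E)$, this forces the two vector fields to coincide on $E$.

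First I would record that the right-hand side $\ii_{\dd{H}_{\LL}^{\mathrm{aff}}}\Pi^{\LL}$ is itself a linear vector field. This is a structural feature of the coupling Poisson tensor $\Pi^{\LL} = \Pi^{\LL}_{H} + \Lambda$: the linear Poisson connection $(\nabla^{\LL})^{\ast}$ sends horizontal lifts of vector fields on $S$ to linear vector fields, the vertical part $\Lambda$ is the fiberwise Lie--Poisson structure (so that Hamiltonian vector fields of fiberwise linear functions are again fiberwise linear), and the coupling form $\sigma^{\LL}$ in \eqref{EcSigmaL} is $\Cinf{\mathrm{aff}}(E)$-valued. Consequently the Hamiltonian vector field of any $\phi \in \Cinf{\mathrm{aff}}(E)$ preserves the splitting $\Cinf{\mathrm{aff}}(E) = \pi^{\ast}\Cinf{S} \oplus \Cinf{\mathrm{lin}}(E)$ and descends along $\pi$, i.e.\ it belongs to $\X{\mathrm{lin}}(E)$ (see \cite{Vor2001, Vor04}). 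On the other hand, $\mathrm{var}_{S}X_{H} \in \X{\mathrm{lin}}(E)$ by the construction of Section~\ref{the linearization procedure along submanifolds}.

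Next I would compare the two derivations on an arbitrary $\phi \in \Cinf{\mathrm{aff}}(E)$. Because the transversal $\LL$ is $X_{H}$-invariant, Theorem~\ref{TeoHamPoissAlg} gives $\dlie{\mathrm{var}_{S}X_{H}}\phi = \{H_{\LL}^{\mathrm{aff}},\phi\}^{\LL}$. For the right-hand side, using that $\dlie{\ii_{\dd{H}_{\LL}^{\mathrm{aff}}}\Pi^{\LL}}\phi = \Pi^{\LL}(\dd{H}_{\LL}^{\mathrm{aff}},\dd\phi)$ together with the first-order approximation property of the coupling structure, $\Pi^{\LL}(\dd\phi_{1},\dd\phi_{2}) = \{\phi_{1},\phi_{2}\}^{\LL} + \OO_{2}$, I obtain
\[
\dlie{\ii_{\dd{H}_{\LL}^{\mathrm{aff}}}\Pi^{\LL}}\phi \,=\, \{H_{\LL}^{\mathrm{aff}},\phi\}^{\LL} + \OO_{2}.
\]
Thus the difference of the two derivations, evaluated on $\phi$, equals the remainder $\OO_{2}$, which vanishes to second order along the zero section and so lies in the ideal $I^{2}(S)$.

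The decisive observation is that this difference is simultaneously \emph{affine} and of order $\OO_{2}$. Indeed, by the first step both $\dlie{\mathrm{var}_{S}X_{H}}\phi$ and $\dlie{\ii_{\dd{H}_{\LL}^{\mathrm{aff}}}\Pi^{\LL}}\phi$ lie in $\Cinf{\mathrm{aff}}(E)$, hence so does their difference. But a fiberwise affine function $\pi^{\ast}f + \ell_{\eta}$ has fiber-degree at most one along $S$, so it belongs to $I^{2}(S)$ only when $f = 0$ and $\eta = 0$; that is, $\Cinf{\mathrm{aff}}(E) \cap I^{2}(S) = \{0\}$. Therefore $\dlie{\mathrm{var}_{S}X_{H}}\phi = \dlie{\ii_{\dd{H}_{\LL}^{\mathrm{aff}}}\Pi^{\LL}}\phi$ for every $\phi \in \Cinf{\mathrm{aff}}(E)$, and since both vector fields are linear, they coincide on $E$, which is \eqref{EcVarXH}. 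I expect the main obstacle to be the first step, namely verifying that the Hamiltonian vector field of an affine function relative to $\Pi^{\LL}$ stays inside $\X{\mathrm{lin}}(E)$, which is precisely where the linear/affine nature of the coupling data $((\nabla^{\LL})^{\ast}, \sigma^{\LL}, \Lambda)$ is used.
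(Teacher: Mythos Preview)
Your argument hinges on the claim that the $\Pi^{\LL}$-Hamiltonian vector field of \emph{any} affine function is linear, and this is where it breaks. The coupling form $\sigma^{\LL}$ in \eqref{EcSigmaL} is indeed $\Cinf{\mathrm{aff}}(E)$-valued, but the horizontal bivector $\Pi^{\LL}_{H}=-\tfrac{1}{2}\sigma^{ij}\,\mathrm{hor}^{\nabla^{\ast}}_{i}\wedge\mathrm{hor}^{\nabla^{\ast}}_{j}$ is built from the \emph{inverse} matrix $\sigma^{ij}$, and the inverse of a fiberwise affine matrix is not fiberwise polynomial unless $\mathscr{K}^{\LL}\equiv 0$. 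Concretely, for basic functions one gets
\[
\Pi^{\LL}(\dd\pi^{\ast}f_{1},\dd\pi^{\ast}f_{2}) \;=\; \sigma^{ij}(x,y)\,\partial_{i}f_{1}\,\partial_{j}f_{2},
\]
which depends on the fiber variable through $\sigma^{ij}$ and hence does \emph{not} lie in $\pi^{\ast}\Cinf{S}$. So $\ii_{\dd\phi}\Pi^{\LL}$ is not projectable (let alone linear) for a general $\phi\in\Cinf{\mathrm{aff}}(E)$; this is exactly why the paper stresses that $\{,\}^{\LL}$ is only a first-order approximation to $\Pi^{\LL}$ and that the remainder vanishes precisely when $\mathscr{K}^{\LL}=0$.

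Without the a~priori linearity of $\ii_{\dd H^{\mathrm{aff}}_{\LL}}\Pi^{\LL}$, your comparison on $\Cinf{\mathrm{aff}}(E)$ only yields that the difference $W:=\ii_{\dd H^{\mathrm{aff}}_{\LL}}\Pi^{\LL}-\mathrm{var}_{S}X_{H}$ satisfies $\dlie{W}\phi\in\OO_{2}$ for all affine $\phi$, i.e.\ the components of $W$ vanish to second order along $S$; this is far from $W=0$. The paper proceeds differently: it writes $\Pi^{\LL}$ explicitly, uses the decomposition \eqref{HF1} of $\mathrm{var}_{S}X_{H}$ together with \eqref{EcSigmaL}, and checks by direct computation that \eqref{EcVarXH} is \emph{equivalent} to the intrinsic condition \eqref{HSFVS32}; the latter is then supplied by the $X_{H}$-invariance of $\LL$ via Lemma~\ref{LemaIntrisicTorsion} and Theorem~\ref{TeoHamPoissAlg}. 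In other words, the linearity of $\ii_{\dd H^{\mathrm{aff}}_{\LL}}\Pi^{\LL}$ is an \emph{output} of that computation for this particular Hamiltonian, not a structural fact about all affine Hamiltonians that can be invoked beforehand.
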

\begin{proof}
Consider the coupling Poisson tensor \ec{\Pi^{\LL}} associated to the data \,\ec{(\nabla^{\ast} = (\nabla^{\LL})^{\ast}, \sigma = \sigma^{\LL}, \Lambda)},
    \begin{equation*}
        \Pi^{\LL} = -\tfrac{1}{2}\sigma^{ij}\,\mathrm{hor}_{i}^{\nabla^{\ast}} \wedge \mathrm{hor}_{j}^{\nabla^{\ast}} + \Lambda, \qquad i,j = 1, \ldots, m.
    \end{equation*}
Here, \,\ec{\sigma^{is}\sigma_{sj} = \delta^{i}_{j}}, and \ec{\sigma_{ij}} are the components of the coupling form $\sigma$. Then, using the representation (\ref{HF1}) for \ec{\mathrm{var}_{S}X_{H}} and the relationship (\ref{EcSigmaL}) between \ec{\sigma^{\LL}} and \ec{\mathscr{K}^{\LL}}, by direct computation, we verify that condition (\ref{EcVarXH}) for \,\ec{H_{\LL}^{\mathrm{aff}} = \pi^{\ast}h + \ell_{\eta}}\, is just equivalent to the equation (\ref{HSFVS32}) for \ec{h \oplus \eta^{\LL}}. This fact together with Theorem \ref{TeoHamPoissAlg} and Lemma \ref{LemaIntrisicTorsion} ends the proof of the theorem.
\end{proof}

Finally, we formulate the following consequence of this result for the existence of linearized models of Hamiltonian group actions. Let \,\ec{\Phi:G \times M \rightarrow M}\, be a \emph{canonical action} of a connected Lie group $G$ on a Poisson manifold \ec{(M,\Pi)}, with a momentum map \,\ec{\mathrm{J}:M \rightarrow \mathfrak{g}^{\ast}},
    \begin{equation*}
        X_{a}\big|_{m} \,=\,\frac{\dd}{\dd t}\bigg|_{t=0}\big[\Phi_{\exp(ta)}(m)\big] \,=\, \Pi^{\natural}\dd\mathrm{J}_{a}\big|_{m}, \quad \forall\,a \in \mathfrak{g}.
    \end{equation*}
Then, the $G$-action leaves invariant a given (embedded) symplectic leaf \,\ec{S \subset M}\, and hence on the normal bundle \,\ec{\pi:E \rightarrow S},\, there exists an induced linearized $G$-action \,\ec{\varphi_{g}:E \rightarrow E}\, defined
by
    \begin{equation*}
        \big(\nu_{g \cdot m}\big)(\dd_{m}\Phi_{g}) \,=\, \varphi_{g} \cdot \nu_{m}, \quad m \in S,
    \end{equation*}
where \,\ec{\nu:\T_{S}M \rightarrow E}\, is the quotient projection.

\begin{theorem}\label{TeoAction}
If the $G$-action is proper, then there exists a $G$-invariant transversal \,\ec{\mathscr{L} \subset \T_{S}M}\, of $S$, and in a $G$-invariant neighborhood of $S$ in $E$, the \emph{linearized} $G$-action $\varphi$ is \emph{canonical} relative to the coupling Poisson structure \ec{\Pi^{\LL}} with fiberwise affine momentum map \,\ec{\mathrm{j}:E \rightarrow \mathfrak{g}^{\ast}}:
    \begin{equation*}
        \mathrm{var}_{S}X_{a} \,=\, \frac{\dd}{\dd t}\bigg|_{t=0}\big[\varphi_{\exp(ta)}\big] \,=\, \Pi^{\LL}\dd\,\mathrm{j}_{a},
    \end{equation*}
where \,\ec{\mathrm{j}_{a}=\mathrm{Aff}(\mathrm{J}_{a} \circ \mathbf{e}) \in \Cinf{\mathrm{aff}}(E)}.
\end{theorem}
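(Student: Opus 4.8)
The plan is to reduce Theorem~\ref{TeoAction} to an application of Theorem~\ref{TeoVarHam}, so the work splits into two independent tasks: first, producing a $G$-invariant transversal, and second, showing that the resulting linearized action is canonical with the claimed momentum map. For the first task I would use properness of the $G$-action. The action of $G$ on $M$ leaves the symplectic leaf $S$ invariant, hence induces a linear $G$-action on $\T_{S}M$ preserving the subbundle $\T{S}$. I would start with an arbitrary transversal $\LL_{0}$ of $S$ and average its associated projection $\T_{S}M\to\T{S}$ against $G$ using a Haar-type integral, which converges precisely because the action is proper (so that the relevant isotropy groups are compact and one can integrate over orbits with a cutoff/normalization). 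Averaging the complementary projectors yields a $G$-equivariant splitting $\T_{S}M=\T{S}\oplus\LL$ with $\LL$ a $G$-invariant transversal. This is the geometric heart of the statement.

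The second task is to verify the $X_{a}$-invariance hypothesis of Theorem~\ref{TeoVarHam} for \emph{every} generator simultaneously, using the invariant transversal just constructed. Since $\LL$ is $G$-invariant, it is in particular invariant under each one-parameter subgroup $\exp(ta)$, whose infinitesimal generator is exactly the Hamiltonian vector field $X_{a}=\Pi^{\natural}\dd\mathrm{J}_{a}$. Thus condition~(\ref{I1}) holds for each $X_{a}$: we have $\big(\dd_{q}\mathrm{Fl}_{X_{a}}^{t}\big)(\LL_{q})=\LL_{\mathrm{Fl}_{X_{a}}^{t}(q)}$ for all $q\in S$, because $\mathrm{Fl}_{X_{a}}^{t}=\Phi_{\exp(ta)}$ and $\LL$ is $\Phi$-invariant. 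Theorem~\ref{TeoVarHam} then applies verbatim to each $a\in\mathfrak{g}$, giving
\begin{equation*}
    \mathrm{var}_{S}X_{a} \,=\, \ii_{\dd(\mathrm{J}_{a})_{\LL}^{\mathrm{aff}}}\Pi^{\LL},
\end{equation*}
where $(\mathrm{J}_{a})_{\LL}^{\mathrm{aff}}=\mathrm{Aff}(\mathrm{J}_{a}\circ\mathbf{e})=\mathrm{j}_{a}$ by the definition~(\ref{LH1}). This identifies $\mathrm{var}_{S}X_{a}$ with the Hamiltonian vector field of $\mathrm{j}_{a}$.

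It remains to package these fiberwise-affine Hamiltonians into a genuine momentum map $\mathrm{j}:E\to\mathfrak{g}^{\ast}$ intertwining the linearized $G$-action $\varphi$. Here I would use the Lie-algebra homomorphism property of the linearization map~(\ref{FVS15}), $\mathrm{var}_{S}[X_{a},X_{b}]=[\mathrm{var}_{S}X_{a},\mathrm{var}_{S}X_{b}]$, together with the fact that $\mathbf{e}^{\ast}$ and $\mathrm{Aff}$ are Poisson-algebra homomorphisms (equation~(\ref{AP2})), so that the assignment $a\mapsto\mathrm{j}_{a}$ is linear and equivariant and satisfies $\{\mathrm{j}_{a},\mathrm{j}_{b}\}^{\LL}=\mathrm{j}_{[a,b]}$ inherited from $\{\mathrm{J}_{a},\mathrm{J}_{b}\}_{M}=\mathrm{J}_{[a,b]}$. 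Since each $\mathrm{j}_{a}$ is fiberwise affine, the collection defines $\mathrm{j}$ as a single $\mathfrak{g}^{\ast}$-valued fiberwise affine map, and the displayed identity $\tfrac{\dd}{\dd t}\big|_{t=0}\varphi_{\exp(ta)}=\Pi^{\LL}\dd\,\mathrm{j}_{a}$ follows. I expect the main obstacle to be the \emph{construction and convergence of the invariant transversal}: one must argue carefully that properness, rather than compactness of $G$, suffices to average the projectors, which typically requires working over the orbit space or using a slice/normalization argument, and one should also confirm that the resulting coupling structure $\Pi^{\LL}$ and the linearization are defined on a genuinely $G$-invariant neighborhood of $S$ in $E$.
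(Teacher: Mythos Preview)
Your overall strategy matches the paper's exactly: construct a $G$-invariant transversal $\LL$ and then apply Theorem~\ref{TeoVarHam} to each infinitesimal generator $X_{a}=\Pi^{\natural}\dd\mathrm{J}_{a}$, obtaining $\mathrm{var}_{S}X_{a}=\ii_{\dd\mathrm{j}_{a}}\Pi^{\LL}$ with $\mathrm{j}_{a}=\mathrm{Aff}(\mathrm{J}_{a}\circ\mathbf{e})$. Your additional remarks on packaging the $\mathrm{j}_{a}$ into a momentum map via the homomorphism properties~(\ref{FVS15}) and~(\ref{AP2}) are correct elaborations that the paper leaves implicit.

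The one point where you diverge from the paper is precisely the step you flag as the main obstacle: the construction of the invariant transversal. You propose to average a complementary projector over $G$ and worry (rightly) that for a merely proper, non-compact action the Haar integral need not converge without a more delicate slice or cutoff argument. The paper bypasses this entirely by invoking a standard result (Duistermaat--Kolk): a proper $G$-action on $M$ admits a $G$-invariant Riemannian metric. One then takes $\LL$ to be the orthogonal complement of $\T{S}$ inside $\T_{S}M$ with respect to this metric; $G$-invariance of the metric immediately gives $G$-invariance of $\LL$. This is both shorter and sidesteps the convergence issue you anticipated, so you may simply replace your averaging step by this citation.
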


The proof follows from Theorem \ref{TeoVarHam} and the fact \cite{DuKo00}: each proper action of a Lie group $G$ admits a $G$-invariant Riemannian metric on $M$. Then, a $G$-invariant transversal $\LL$ is defined as the orthogonal complement to $\T{S}$ in \ec{\T_{S}M}.

Notice that the assertion of Theorem \ref{TeoAction} is true when the Lie group $G$ is compact, since in this case, the action is proper.

    \subsection*{Acknowledgements}

This work was partially supported by the Mexican National Council of Science and Technology (CONACyT), under research project CB-258302. J. C. R.-P wishes also to thank CONACyT for the postdoctoral fellowship. The authors are very grateful to an anonymous Referee for critical comments and useful observations.

\end{document}